\documentclass{amsart}
\usepackage{graphicx} 
\usepackage[english]{babel}
\usepackage[utf8]{inputenc}

\usepackage{tabularx}

\usepackage{graphicx}
\usepackage{setspace}
\usepackage{subfigure}
\usepackage{wrapfig}
\usepackage[dvipsnames]{xcolor} 
\usepackage{listings}

\usepackage{amsmath}
\usepackage{amsfonts}
\usepackage{amssymb}
\usepackage{amsthm}
\usepackage{dlfltxbcodetips}
\usepackage{mathtools}
\usepackage{stix} 
\usepackage{tikz}

\definecolor{LozTop}{named}{red}
\definecolor{LozLeft}{named}{violet}
\definecolor{LozRight}{named}{green}

\newcommand{\loz}[3][0.95]{%
  \tikz[baseline=-0.6ex, scale=#1, x=0.866ex, y=1ex]{
    \begin{scope}[rotate=#2]
      \path[draw=black, line width=0.25pt, fill=#3]
        (0,1) -- (1,0.5) -- (0,0) -- (-1,0.5) -- cycle;
    \end{scope}
  }%
}

\newcommand{\lozH}[1][1.5]{\loz[#1]{0}{LozTop}}     
\newcommand{\lozL}[1][1.5]{\loz[#1]{60}{LozLeft}}   
\newcommand{\lozR}[1][1.5]{\loz[#1]{-60}{LozRight}} 
\newcommand{\lozHlow}[1][1.5]{\raisebox{-0.7ex}{\lozH[#1]}}

\numberwithin{equation}{section}

\newcommand*{\N}{\mathbb{N}}
\newcommand*{\Z}{\mathbb{Z}}
\newcommand*{\Q}{\mathbb{Q}}

\DeclareMathOperator{\Krav}{Kr}

\usepackage{thmtools}

\declaretheorem[
	name=Theorem,
	numberwithin=section
	]{thm}
\declaretheorem[
	name=Lemma,
	sibling=thm,
	]{lem}
\declaretheorem[
	name=Proposition,
	sibling=thm,
	]{prop}
\declaretheorem[
	name=Corollary,
	sibling=thm,
	]{cor}

\declaretheorem[
	name=Definition,
	style=definition,
	numbered=no,
	]{defin}

\declaretheorem[
	name=Notation,
	style=definition,
	numbered=no,
	]{nota}

\declaretheorem[
	name=Remark,
	style=remark,
	numbered=no
	]{rem}	
\declaretheorem[
	name=Example,
	style=remark,
	numbered=no
	]{exam}

\title{The 1/3-phenomenon of placement probabilities of tilings in the semiregular hexagon}

\author{Marcus Schönfelder}
\thanks{{marcus.schoenfelder@univie.ac.at}. The author was supported by the Austrian Science Foundation FWF, grant 10.55776/F1002, in the framework of the Special Research Programm ``Discrete Random Structures: Enumeration and Scaling Limits".}

\date{May 2026}

\lstdefinelanguage{MathematicaCustom}{
    morekeywords={Plot,Sin,Cos,Tan,Log,Exp,Table,Manipulate, Simplify, Zb, Rate, Sum, Binomial, Pochhammer, Factorial},
    sensitive=true,
    morestring=[b]"
}

\begin{document}

\begin{abstract}
    We prove Krattenthaler's conjecture from 2001 about the $1/3$-phenomenon for lozenge tilings of semiregular hexagons. In a first step we reduce the problem to the case of regular hexagons. In a second step we further reduce the question to a special case already covered in the literature. This is achieved by vast application of the celebrated Zeilberger Algorithm and the Holonomic Ansatz.  
\noindent  \end{abstract}
\maketitle

\section{Introduction}
\subsection{Background and motivation}
The dimer model has become one of the central objects of study in statistical mechanics. It concerns the behavior of random dimer configurations on graphs. In the language of combinatorics, a \textit{dimer configuration} of a graph $G=(V,E)$ is simply a \textit{perfect matching} of its vertices, i.e., a collection of edges $\mu\subseteq E$ such that every vertex $v\in V$ is incident to exactly one edge $e\in \mu$. If the graphs are subgraphs of the square or hexagonal lattices, perfect matchings are in bijection with tilings of regions that are dual to the graph. The most prominent examples are domino and rhombus (lozenge) tilings. For rhombus tilings, one seeks to cover a region formed by unit triangles with rhombus-shaped tiles, also called \textit{lozenges}. An example is shown in Figure~\ref{domi1}. The connection with perfect matchings is illustrated in Figure~\ref{domi2}.

\begin{figure}
\centering
\includegraphics[width=0.4\textwidth]{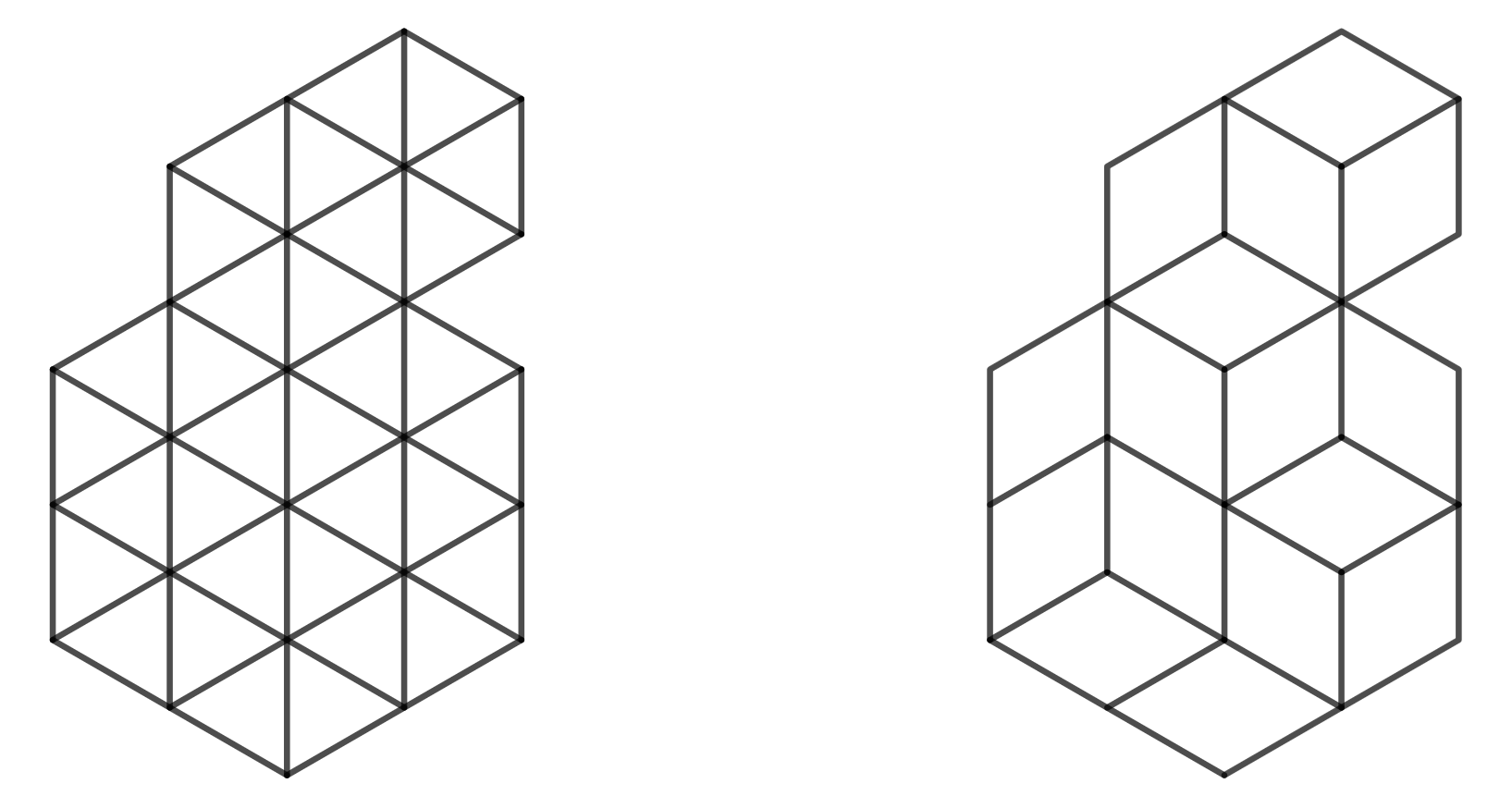}
\caption{A lozenge region left and one of its possible rhombus tilings on the right.}
\label{domi1}
\end{figure}

\begin{figure}
\centering
\includegraphics[width=0.2\textwidth]{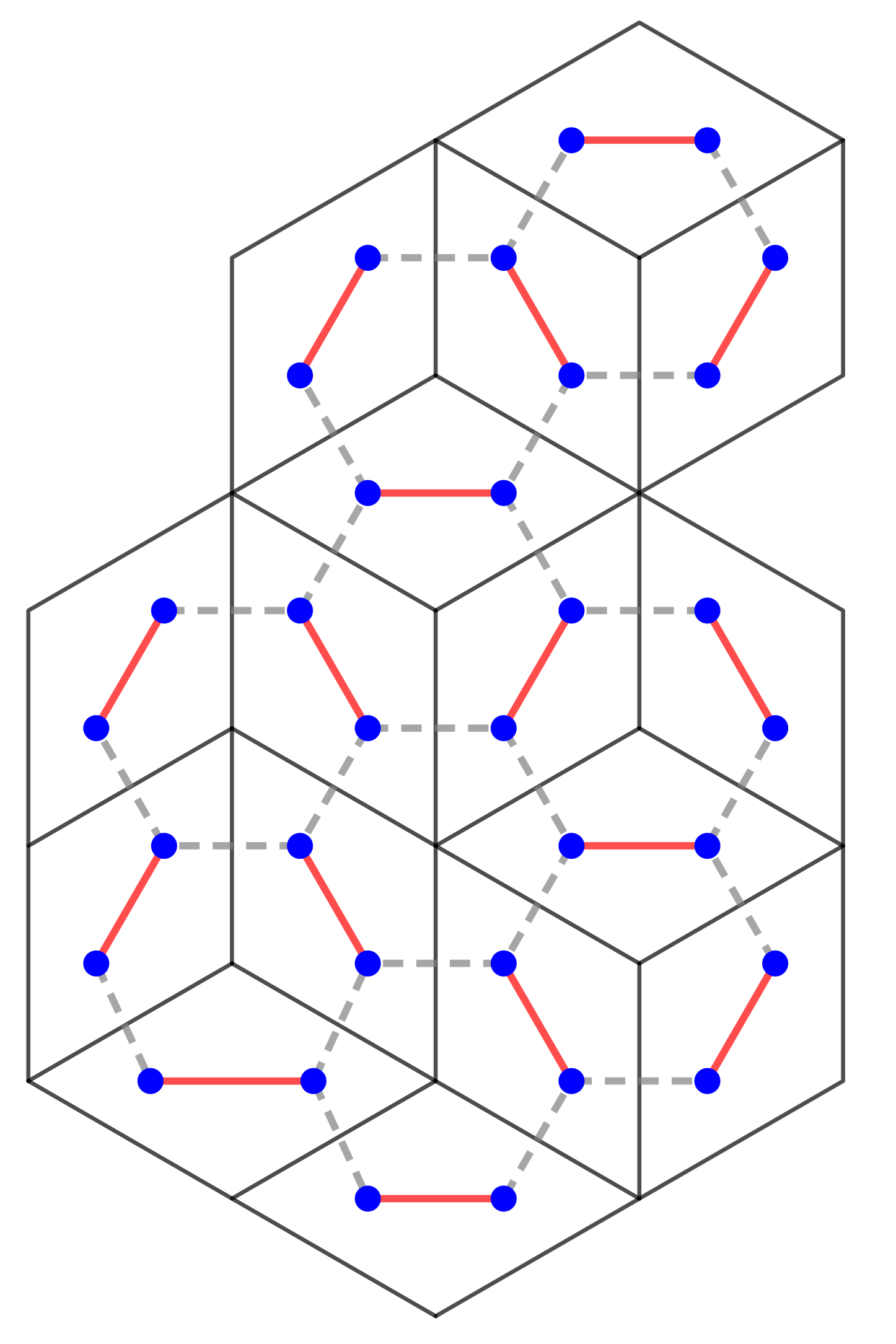}
\caption{The bijection between rhombus tilings and perfect matchings of the dual graph.}
\label{domi2}
\end{figure}

Advances in this theory have gone hand in hand with the development of new enumerative techniques. The first breakthroughs were achieved by Temperley and Fisher~\cite{TemperleyFisher} and by Kasteleyn~\cite{KSquare}, who independently derived a formula enumerating domino tilings of the $n\times m$ rectangle. For rhombus tilings, MacMahon’s Box Formula~\cite{MacMahon} counts tilings of semiregular hexagons and relates them to plane partitions. The semiregular hexagon $H_{a,b,c}$ with side lengths $a,b,c$ is a hexagonal region of the triangular lattice, as shown in Figure~\ref{hexa1}. MacMahon’s Box Formula then states that the number of tilings of $H_{a,b,c}$ is given by
 \[\prod_{i=1}^a\prod_{j=1}^b\prod_{k=1}^c\frac{i+j+k-1}{i+j+k-2}.\]

Later on, Kasteleyn~\cite{Kasteleyn} developed a method to express the number of perfect matchings of any planar graph as a Pfaffian or determinant, thereby making the dimer model a \textit{solvable} statistical model. In parallel, researchers began to study correlation functions in this probabilistic setting; an early example is the work of Fisher and Stephenson~\cite{FisherStephenson}. Later, Kenyon~\cite{Kenyon} combined the computation of correlations with Kasteleyn’s theory. In their simplest form, so-called one-point correlation functions ask for the \textit{placement probability}, namely the probability of observing a given tile at a specified location in a random tiling. Equivalently, they give, for a specified edge, the probability that it is contained in a random perfect matching.

For the semiregular hexagon, closed formulas for the placement probabilities were developed by Fischer~\cite{FischerHexaLoch}, Gilmore~\cite{gilmoreInv}, and, in greater generality, Petrov~\cite{Petrov}. However, the complexity of these formulas makes them cumbersome for many applications. In 2001, Krattenthaler~\cite{1/3} conjectured a substantial simplification of their structure: he argued that, due to symmetries, in a semiregular hexagon whose size tends to infinity the placement probability of a single lozenge should be approximately $1/3$ plus an error term. Moreover, he claimed that this error term is a fairly explicit expression whose form depends on the fixed location of the lozenge under consideration. An analogous symmetry phenomenon was proved in~\cite{1/4} for domino tilings of the Aztec diamond.

In this article, we prove Krattenthaler’s conjectured $1/3$‑phenomenon. The precise setup is explained below, and the main result is stated in Theorem~\ref{1/3}.

\subsection{Setup and main result}
Consider lozenge tilings of the semiregular hexagon $H_{a,b,c}$ with integer side lengths $a,b,c,a,b,c$, as shown in Figure~\ref{hexa1}. Such a region admits three types of lozenges: horizontal \lozHlow, right-leaning \lozL, and left-leaning \lozR. One easily checks that the positions of the horizontal lozenges already uniquely determine the entire tiling.
\begin{figure}
\centering
\includegraphics[width=0.4\textwidth]{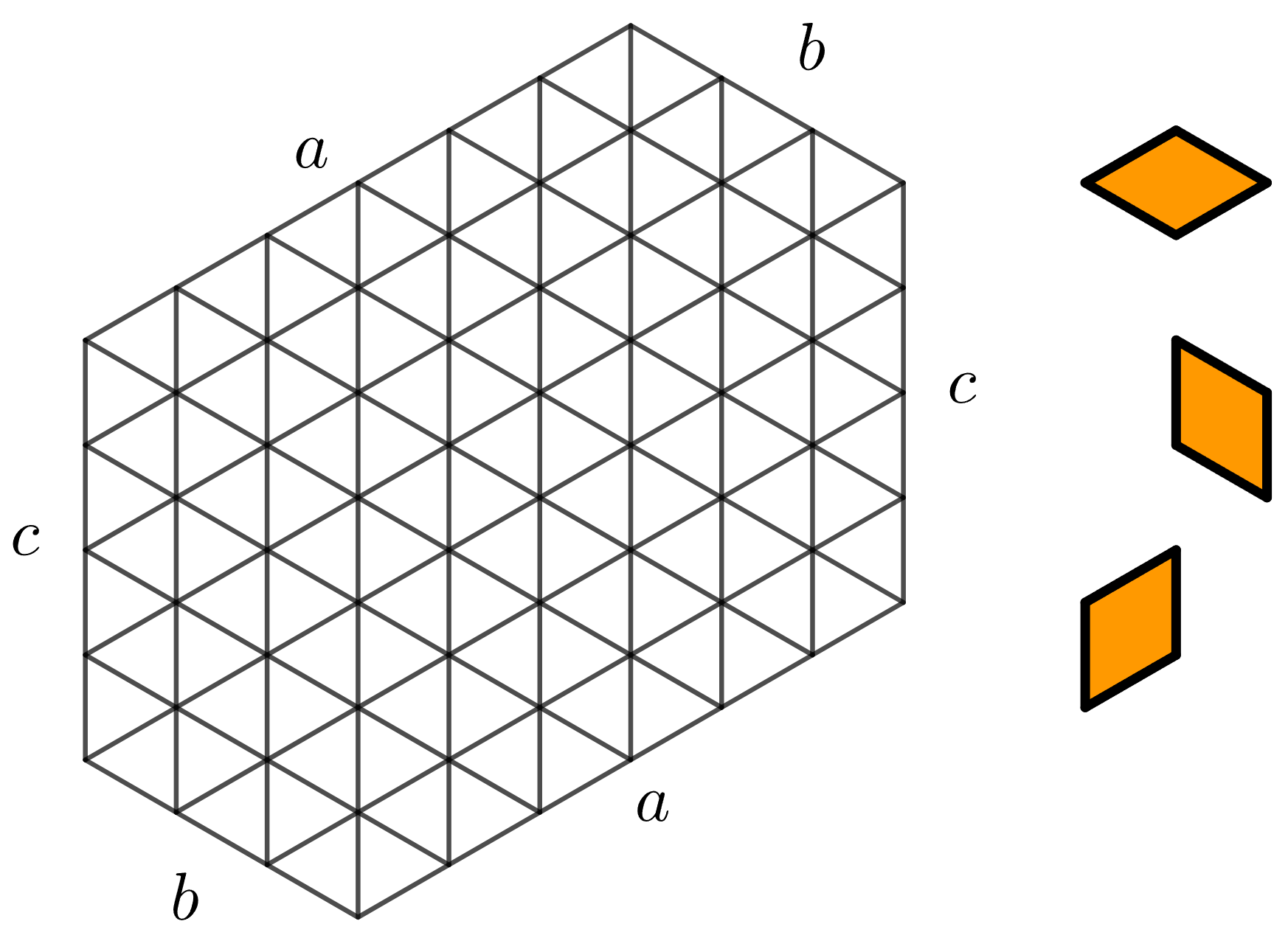}
\caption{The region $H_{a,b,c}$ has to be tiled with three types of lozenges. In the picture we have $a=6$, $b=3$ and $c=4$.}
\label{hexa1}
\end{figure}

We are interested in placement probabilities, in particular the probability of observing a horizontal lozenge at a given position in a uniformly random lozenge tiling of $H_{a,b,c}$. To study this statistic, we parametrize the possible positions of a horizontal lozenge by a suitable coordinate system.

\begin{nota}
    We denote by $\mathbb{P}(a,b,c,x,y;n)$ the probability to observe a horizontal lozenge in a random tiling of the region $H_{a+2n,b+2n,c+2n}$ at position $(x+2n,y+2n)$ according to the coordinate system described in Figure~\ref{coord}. 
\end{nota}
\begin{figure}
\centering
\includegraphics[width=0.4\textwidth]{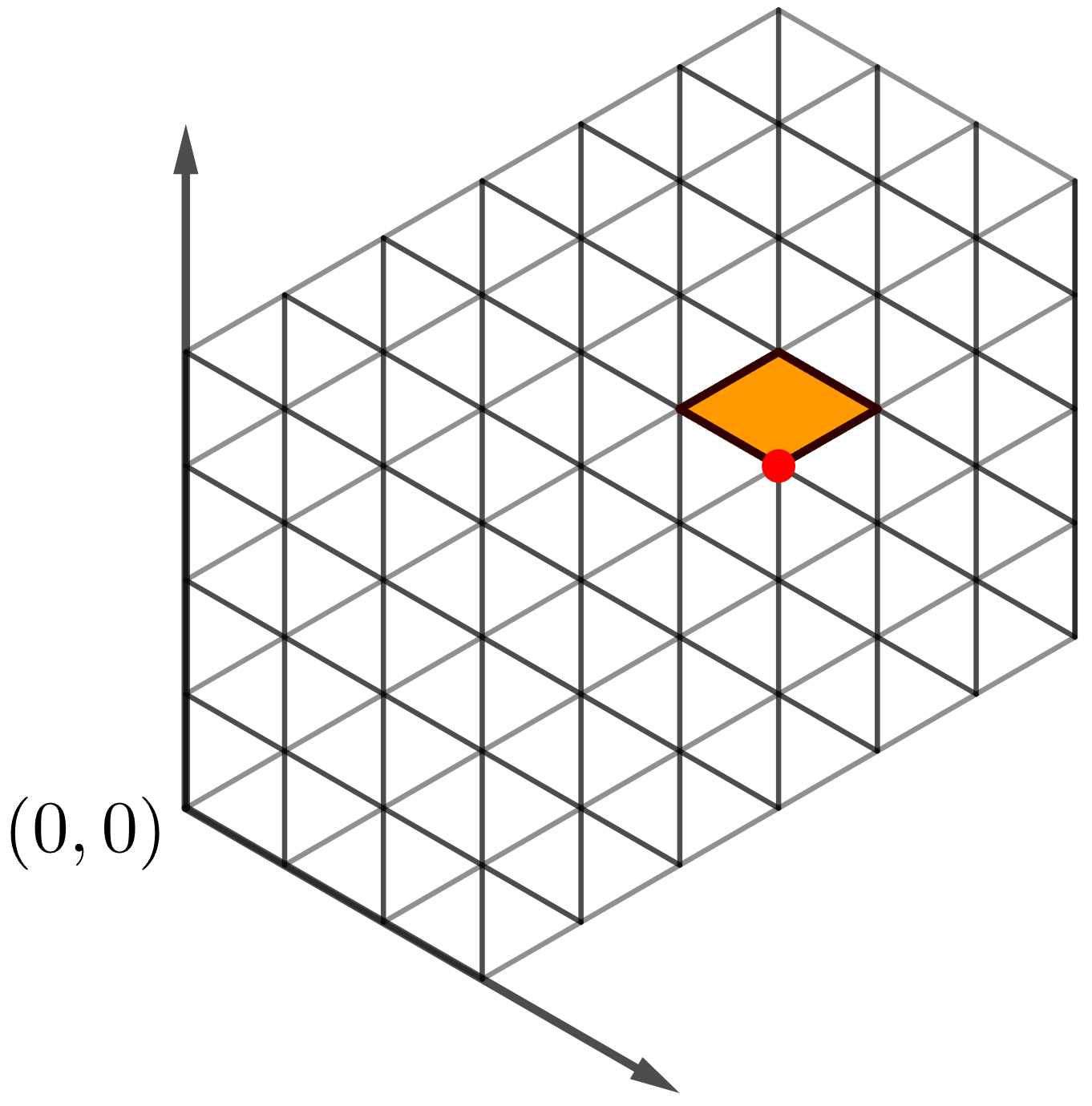}
\caption{Parametrising the position of horizontal lozenges: We set the south-west corner of the hexagon $H_{a,b,c}$ to be the origin. Moreover let the two adjacent sides generate the axis of our coordinate system. The position $(x,y)\in\Z$ of the lozenge marked in orange is then given by the location of its south-most point in this coordinate system. In this particular case, we have marked the lozenge at position $(6,6)$.}
\label{coord}
\end{figure}

Now, Krattenthaler's conjectural $1/3$-phenomenon, first stated in \cite{1/3}, proposes the following.

\begin{thm}[\sc{The $1/3$-phenomenon}]\label{1/3}
    For all $a,b,c,x,y\in\Z$ and all positive integers $n$ with $n\geq N_{a,b,c,x,y}\in\N$ large enough we have
    \[\mathbb{P}(a,b,c,x,y;n)=\frac{1}{3}+f_{a,b,c,x,y}(n)\cdot\binom{2n}{n}^3\bigg/\binom{6n+2}{3n+1}\]
    where $f_{a,b,c,x,y}(n)$ is a rational function in $n$.
\end{thm}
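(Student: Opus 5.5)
The plan has two reductions, following the strategy announced in the abstract. First I would reduce general side lengths $a,b,c$ to the regular hexagon. The starting point is an explicit hypergeometric formula for the placement probability of a horizontal lozenge, taken from Fischer~\cite{FischerHexaLoch}, Gilmore~\cite{gilmoreInv} or Petrov~\cite{Petrov}. In the form I would use, $\mathbb{P}(a,b,c,x,y;n)$ is a finite (double) sum whose summand is hypergeometric jointly in the summation indices and in $n$, and whose parameters depend linearly on $a,b,c,x,y$. For fixed shifts $(a,b,c,x,y)$ the quantity $\mathbb{P}(a,b,c,x,y;n)$ is then a holonomic function of $n$. Moreover, via the shift operators $a\mapsto a+1$, $x\mapsto x+1$, and so on, it satisfies contiguity relations. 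I would compute these with Zeilberger's algorithm and the Holonomic Ansatz, creative telescoping in each parameter.

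The goal of the first step is an identity
\[\mathbb{P}(a,b,c,x,y;n)=\sum_{(x',y')}r_{x',y'}(n)\,\mathbb{P}(0,0,0,x',y';n)+s(n),\]
with finitely many terms and rational coefficients $r,s\in\mathbb{Q}(n)$. This expresses a shifted hexagon in terms of regular hexagons $H_{2n,2n,2n}$ at nearby positions. I would obtain it by induction on $|a|+|b|+|c|$, using contiguity relations that change one side length by one at the cost of moving the position. A useful check is the consistency condition $\sum r_{x',y'}(n)=1$ modulo the $1/3$-part, since probabilities of the three lozenge types sum to one locally. This guarantees that the constant $1/3$ survives the reduction.

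Second, within the regular hexagon I would reduce an arbitrary position $(x,y)$ to a single base position, presumably the centre, whose asymptotic form is already known in the literature. Again I would use recurrences in $x$ and $y$ obtained by creative telescoping, together with the symmetries of the hexagon (rotation by $120^\circ$ and reflections). The symmetries relate the three lozenge types and are the true source of the $1/3$. After this step, $\mathbb{P}(a,b,c,x,y;n)-1/3$ equals a rational function of $n$ times $\mathbb{P}(0,0,0,x_0,y_0;n)-1/3$. The known special case gives the latter as a rational function times $\binom{2n}{n}^3/\binom{6n+2}{3n+1}$. Since that ratio is hypergeometric in $n$, rational multiples of it stay of the same form, and Theorem~\ref{1/3} follows.

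The condition $n\ge N_{a,b,c,x,y}$ is needed because the rational coefficients may have poles at small $n$, and because the summation ranges only stabilise once $n$ is large relative to the shifts. The main obstacle is making the contiguity relations rigorous and uniform in the arbitrary integer shifts. A direct computer-algebra certificate exists only for each specific shift. I would therefore derive first-order operators in each parameter symbolically, using Zeilberger's algorithm with a symbolic parameter. I would then verify the certificates as rational-function identities in all variables simultaneously, so that induction over the shifts is justified. A secondary difficulty is the boundary terms in creative telescoping, which must vanish for $n$ large.
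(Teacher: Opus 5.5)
\textbf{There is a genuine gap: you assume the boundary terms vanish, but they carry the entire correction term.} Your overall architecture matches the paper: reduce to the regular hexagon, then to the centre, then quote Fulmek--Krattenthaler. The mechanism you propose for the reductions, however, would not close. In Fischer's formula the outer sum runs over $1\le j\le a+2n$ (over $j\le 2n$ in the regular case), and this upper limit is not a natural boundary of the summand. Shifting $a$, or telescoping in $j$, therefore leaves a term at $j=a+2n$ (resp.\ $j=2n+1$). That term does \emph{not} vanish for large $n$, and it is not rational in $n$. In the paper's recurrence \eqref{xrec} it is the whole difference $\mathbb{P}(x+1,y;n)-\mathbb{P}(x,y;n)$, and it is exactly where the factor $\binom{2n}{n}^3/\binom{6n+2}{3n+1}$ enters. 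Consequently, an identity $\mathbb{P}(a,b,c,x,y;n)=\sum r_{x',y'}(n)\,\mathbb{P}(0,0,0,x',y';n)+s(n)$ with rational $r,s$ is not what these algorithms produce. You also give no source for the ``contiguity relations that change a side length at the cost of moving the position''.

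\textbf{The missing idea is to evaluate these terms in closed form.}
\begin{itemize}
\item Fischer's triple sum factors as $\sum_j I(j;n)F(j;n)$ with single sums $I$ and $F$. Since $I_{a,\dots}(j;n)=I_{a-1,\dots}(j;n)$ for $j<a+2n$, the difference $\mathbb{P}(a,\dots)-\mathbb{P}(a-1,\dots)$ is the single product $\frac{(c+2n)!}{(b+2n+1)_{c+2n}}I(a+2n;n)F(a+2n;n)$ (Lemma~\ref{redA}).
\item Proposition~\ref{IFshape} then shows that $I(a+2n;n)$ and $F(a+2n;n)$ are each a fixed hypergeometric term times a rational function. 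It is proved by order-two Zeilberger recurrences in each of $a,b,c,x,y$ with symbolic certificates, which is how uniformity in the shifts is actually obtained. This is combined with $2\cdot 2^5$ base cases verified by guessing and a recurrence in $n$. The same proposition evaluates the boundary term $G(x,y,n,2n+1)$ in Section~\ref{centreproof}.
\item The recurrences in $x$ and $y$ there are first order with operator $S_x-1$ (resp.\ $S_y-1$) plus this explicit term. That form is what reduces everything to the single position $(0,0)$ and automatically carries the constant $\frac13$ along. A general homogeneous recurrence of order $d$ would need $d$ base positions, and would preserve $\frac13$ only if it annihilated constants.
\item The side $c$ is not reduced like $a$, nor by reflection, because Fischer's formula is asymmetric and rotation turns horizontal lozenges into tilted ones. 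There the identity that the three lozenges covering a fixed triangle have probabilities summing to $1$, together with rotation, is an actual reduction step, not merely a consistency check.
\end{itemize}
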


\begin{exam}
    For each instance of $f_{a,b,c,x,y}(n)$, Theorem \ref{1/3} together with MacMahon's Box Formula yields a compact enumeration formula counting the number of lozenge tilings in $H_{a+2n,b+2n,c+2n}$ with a horizontal tile fixed at $(x+2n,y+2n)$. Below, we list a couple of cases in which $f_{a,b,c,x,y}(n)$ is known. They and even more instances can also be found in the original article of Krattenthaler~\cite{1/3}.
    \begin{align}
        f_{-1,-1,0,-1,-1}(n)=f_{-1,0,0,0,-1}(n)=f_{1,2,1,1,0}(n)={}&0 \hphantom{aaaaa} \text{for }n\geq 1,\notag\\
        f_{1,1,0,0,-1}(n)=f_{1,1,0,0,0}(n)=f_{2,0,1,1,0}(n)={}&\frac{1}{3}\hphantom{aaaaa} \text{for }n\geq 1,\notag\\
        f_{0,0,0,0,0}(n)={}&-\frac{(6n+1)}{6(3n+1)}\hphantom{aaaaa} \text{for }n\geq 1,\notag\\
        f_{2,1,0,3,-1}(n)={}&\frac{4n^3+18n^2+12n+1}{6(n + 1)^2
(2n - 1)}\hphantom{aaaaa} \text{for }n\geq 2.\notag
\end{align}
\end{exam}

In 2025—twenty-four years after Krattenthaler formulated his conjecture—we proved a related phenomenon for domino tilings of the Aztec diamond (see \cite{1/4}). There we derived a formula analogous to Theorem~\ref{1/3} by establishing linear recurrences for the placement probabilities with respect to the individual parameters. In the Aztec diamond, the Domino Shuffling algorithm for generating tilings uniformly at random (see \cite{Azdiam2,jockusch} for details) was the central tool that reduced the problem to individual enumerations. For the semiregular hexagon, there also exist Markov-chain algorithms that sample tilings uniformly at random; see, in particular, the work of Gorin and Borodin~\cite{hexshuffle}. However, the sampling algorithms for semiregular hexagons are much more involved than Domino Shuffling. Nevertheless, we will also prove the $1/3$-phenomenon by establishing linear recurrences. These will be obtained via Zeilberger’s algorithm~\cite{AB} and, more generally, the holonomic Ansatz, with substantial support from computer algebra systems.

First, in Section~2 we present a brief recap of the basic theory of Zeilberger’s algorithm, the holonomic Ansatz, and the idea of Creative Telescoping. We also explain how the Mathematica packages by Paule and Schorn~\cite{Paule} and by Koutschan~\cite{holo,KoutschanThesis,Koutschan} are used to support our proof.  

In Section~3, we turn to new results. Using a certain triple-sum formula for placement probabilities due to Ilse Fischer~\cite{FischerHexaLoch}, together with Zeilberger’s algorithm, we reduce the proof of the $1/3$-phenomenon to the case of regular hexagons, i.e., to the case in the theorem where $a=b=c=0$. 

Finally, in Section~4, we apply the Holonomic Ansatz to further reduce the problem to the case $x=y=0$. Since $\mathbb{P}(0,0,0,0,0;n)$ has already been studied in the literature and has been proved to exhibit the $1/3$-phenomenon by Krattenthaler and Fulmek~\cite{CenterCase}, this completes the proof of Theorem~\ref{1/3}.

\section{The Zeilberger Algorithm and the Holonomic Ansatz}\label{ZAHA}
In this section, we provide basic information about the algorithmic tools we use to prove the 1/3-phenomenon. Readers familiar with the Mathematica packages \texttt{fastZeil.m} and \texttt{HolonomicFunctions.m} may skip this section. For others, we briefly review the core ideas of the Holonomic Ansatz for hypergeometric functions and give some context motivating further interest. Moreover, we explain how to interpret the outputs of these algorithms, in case the reader wishes to reproduce the computer-assisted parts of our proof.

The Zeilberger algorithm, or more generally the Holonomic Ansatz, aims to find recurrence relations for so-called \textit{holonomic sequences} automatically using computer algebra systems. In this sense, a sequence $(F(n))_{n\in\N}$ is called \textit{holonomic} or \textit{P-recursive} if its elements satisfy a linear recurrence with polynomial coefficients. That is, there exist  polynomials $a_0(n),\dots,a_d(n)$ such that
\[a_0(n)F(n)+a_1(n)F(n-1)+a_2F(n-2)+\dots +a_d(n)F(n-d)=0\]
for all $n\geq d$, where $d\in \N$ is the order of the recurrence. In particular, all hypergeometric functions are holonomic with respect to each of their parameters. However, deriving their recurrences by hand can be extremely cumbersome, if not impossible.

In this section we present the general ideas of the Holonomic Ansatz for hypergeometric functions. The exposition is based primarily on the book "A=B" by Marko Petkovšek,
 Herbert S. Wilf and Doron Zeilberger \cite{AB} which itself builds on the work of Sister Mary Celine Fasenmyer \cite{SisterCThesis,SisterC} who in her thesis worked out a first method to algorithmically obtain a recurrence relation for hypergeometric polynomials, Gosper~\cite{Gosper} whose algorithm lays the groundwork for the idea of Creative Telescoping, Zeilberger~\cite{ZeilbergerSis,Zeilbergerfast,ZeilbergerCrea} finally generalising Sister Celine's to sums of proper hypergeometric terms and Wilf and Zeilberger~\cite{WilfZ} who showed how a rational \textit{certificate} yields concise, fully rigorous proofs of such recurrence identities. We start with a basic definition.
 \begin{defin}[\cite{AB}]
     A function $F(n,k)$ is called \textit{proper hypergeometric} if 
     \[F(n,k)=p(n,k)\frac{\prod_{i=1}^I(a_in+b_ik+c_i)!}{\prod_{j=1}^J(u_jn+v_jk+w_j)!}x^k,\]
     where $x$ is an indeterminate over the complex numbers, $p(n,k)$ is a bivariate polynomial, all the $a_i,b_i,c_i,u_j,v_j$ and $w_j$ are specific integers and $I$ and $J$ are positive integers. We call 
     \[H(n)=\sum_{k=-\infty}^\infty F(n,k)\] a hypergeometric function.
 \end{defin}
 \begin{rem}
     Notice in particular that $F(n+1,k)/F(n,k)$ and $F(n,k+1)/F(n,k)$ are both rational functions.
 \end{rem}
The idea of Zeilberger’s algorithm is to find linear recurrences for the summands $F(n,k)$, which are often easier to determine, and then lift them to recurrences for $H(n)$. In \cite[Thm. 6.2.1]{AB} it is shown that for a proper hyper geometric term $F(n,k)$ there exist always polynomials $a_0(n),a_1(n),\dots, a_d(n)$ and a function $G(n,k)$, with $G(n,k)/F(n,k)$ rational in $n$ and $k$, such that
\[a_0(n)F(n,k)+a_1(n)F(n+1,k)+\dots +a_d(n)F(n+d,k)=G(n,k+1)-G(n,k).\]
The subsequent chapter of \cite{AB} explains how to compute both $G(n,k)$ and the coefficients $a_j(n)$. 
Now, if $G(n,k)$ has finite support, summing over all integers $k$ will then lead to
\[a_0(n)H(n)+a_1(n)H(n+1)+\dots +a_d(n)H(n+d)=0\]
since the right-hand side telescopes. And even if $H(n)=\sum_{k=0}^N F(n,k)$ is only a finite sum, we at least obtain an inhomogeneous linear recurrence by summing over all values $k=0,\dots,N$:
\[a_0(n)H(n)+a_1(n)H(n+1)+\dots +a_d(n)H(n+d)=G(n,N+1)-G(n,0).\]
As a by-product, implementations of the Zeilberger Algorithms also return a rational function $R(n,k)$, called the \textit{proof certificate}. While the final output is the recurrence for $H(n)$, the certificate satisfies the relation $G(n,k)=R(n,k)\cdot F(n,k)$. Therefore, to verify the resulting recurrence for $H(n)$, it suffices to check that
\[a_0(n)F(n,k)+\dots +a_d(n)F(n+d,k)=R(n,k+1)F(n,k+1)-R(n,k)F(n,k)\]
which, after dividing both sides by $F(n,k)$,  reduces to comparing two rational function.

We will see in the following sections that the placement probabilities $\mathbb{P}(a,b,c,x,y;n)$ can be expressed as hypergeometric sums and products. Hence, it is natural to apply the Holonomic Ansatz to these quantities. We use the Mathematica implementation \texttt{fastZeil.m} by Paule and Schorn~\cite{Paule}, which can be obtained from the website of the Research Institute for Symbolic Computation (RISC) at the Johannes Kepler University Linz. Its main command is \texttt{Zb[…]}, which applies Zeilberger’s algorithm. We illustrate its use with an example.

First, we need a hypergeometric function $H(n)=\sum_kF(n,k)$. Take for example the Kravchuk polynomial 
\[H(n):=\Krav(a,b;n)=\sum_{k=0}^a(-1)^k\binom{b}{k}\binom{n-b}{a-k}\]
with summands $F(n,k)=(-1)^k\binom{b}{k}\binom{n-b}{a-k}$. (For more details about the Kravchuk polynomials in this normalisation see~\cite{ErrorCorrect}.) It is easy to check that $F(n,k)$ is proper hypergeometric for fixed $a,b$ and $x=-1$. Now, applying \texttt{Zb[...]} expects the syntax
\[\texttt{Zb[}F(n,k),\{k,\textit{min, max}\},n,\deg\texttt{]}\]
where \textit{min} and \textit{max} specify the lower and upper bounds of the summation index $k$, and $\deg$ is the maximum order of the recurrence to be found. In the case of Kravchuk polynomials, which are orthogonal and therefore satisfy a three-term recurrence, we invoke
\begin{lstlisting}[language=Mathematica, numbers=none]
    Zb[(-1)^k*Binomial[b,k]*Binomial[n-b,a-k],{k,0,a},   n,2]
\end{lstlisting}
and obtain:
\begin{lstlisting}[ numbers=none]
    If `a' is a natural number, then:
    {-2 (-1 + b - n) * SUM[n] + (-4 + 2 a + 2 b - 3 n) * SUM[1+n] + (2 - a + n) * SUM[2+n] == 0}
\end{lstlisting}
where we just need to set $H(n)=\texttt{SUM[n]}$ to obtain the desired recurrence. If we want to see the certificate, we just enter \texttt{show[R]} and get:
\[\texttt{(k (-1 + b - n))/(1 - a - b + k + n)}.\]

However impressive, this algorithm helps only to some extent. Once the expressions become too complicated, Zeilberger’s algorithm may fail to produce an output before our computational resources are exhausted. When this happens, we switch to the algorithms provided by \texttt{HolonomicFunctions.m} by Christoph Koutschan~\cite{KoutschanThesis,Koutschan}. This package can be obtained from the same website as \texttt{fastZeil.m}. The main difference is that we lift the problem to the more flexible language of operators.
\begin{nota}
    Let $F(x)$ be a function in the integer variable $x$. We denote by $S_{x}$ the \textit{shifting operator of} $x$. That is, we have
    \[S_{x}F(x)=F(x+1).\]
\end{nota}
In this notation the recurrence relation
\[a_0(n)F(n,k)+a_1(n)F(n+1,k)+\dots +a_d(n)F(n+d,k)=G(n,k+1)-G(n,k)\]
becomes
\[\big(a_0(n)+a_1(n)S_n+\dots +a_d(n)S_n^d\big)F(n,k)=(S_k-1)G(n,k).\]
Observe that the leftmost expression is now a polynomial in $S_n$.  Returning to our example of the Kravchuk polynomials, Zeilberger’s algorithm tells us that
\[-2(-1+b-n)+(-4+2a+2b-3n)S_n+(2-a+n)S_n^2\]
is an operator that annihilates the Kravchuk polynomial $\Krav(a,b;n)$. Therefore, instead of seeking recurrences, we may equivalently search for operators in the annihilator ideal within the Ore algebra generated by the shift operators. This is precisely what the command \texttt{Annihilator[...]} does: it returns is a Gröbner basis for the annihilator ideal. For the Kravchuk polynomials we call
\begin{lstlisting}[numbers=none]
    Annihilator[Sum[(-1)^k*Binomial[b, k]*Binomial[n - b, a - k], {k, 0, a}], {S[a], S[b], S[n]}]
\end{lstlisting}
where the first argument is now the full expression for $\Krav(a,b;n)$ (not just its summand) and the second argument lists the shifting operators we wish to consider. This yields 
\begin{lstlisting}[language=Mathematica, numbers=none]
    {(b - n) S[b] + (-1 + a - n) S[n] + (1 - 2 a - b + 2 n), (-1 - a) S[a] + (-1 + a - n) S[n] + (1 - a - 2 b + 2 n), (2 - a + n) S[n]^2 + (-4 + 2 a + 2 b - 3 n) S[n] + (2 - 2 b + 2 n)}
\end{lstlisting}
and one may notice that the last element coincides with the recurrence we have already found.

One further algorithm we will use frequently in Section~\ref{centreproof} is the \texttt{CreativeTelescoping} command, which is useful for eliminating parameters. We use it in the form
\[\texttt{CreativeTelescoping[}\textit{Ideal}, \textit{teleOP}-1,\{\textit{remaining OPs}\}\texttt{]}\]
where \textit{Ideal} is a Gröbner basis of an annihilating ideal, \textit{teleOP} is the telescoping shifting operator and $\{\textit{remaining OPs}\}$ is the set of operators that remain. The output consists of two lists of operators $\{A_1,\dots,A_l\}$ and $\{B_1,\dots,B_l\}$ such that for each $i=1,\dots, l$ we have
\[A_i+(\textit{teleOP}-1)B_i \in \textit{Ideal}\]
and the $A_i$'s involve only the operators in $\{\textit{remaining OPs}\}$. In other words, the command\break \texttt{CreativeTelescoping} produces annihilating operators of the specified telescoping form. Applied again to Kravchuk polynomials we call
\begin{lstlisting}[numbers=none]
    CreativeTelescoping[Annihilator[Sum[(-1)^k*Binomial[b, k]*Binomial[n - b, a - k], {k, 0, a}], {S[a], S[b], S[n]}],S[a]-1,{S[b],S[n]}]
\end{lstlisting}
and obtain
\begin{lstlisting}[numbers=none]
    {{1}, {(-1 + a - n)/(2 b)S[n] + (1 + n)/(2 b)}}
\end{lstlisting}
which means that
\[1+(S_a-1)\bigg(\frac{-1+a-n}{2b}S_n+\frac{1+n}{2b}\bigg)\]
is an annihilator for $\Krav(a,b;n)$. In the following sections we will see very useful and more involved applications of these algorithms.

\section{Reduction to the regular case}

This section is dedicated to reducing the problem of the 1/3-phenomenon to the case of regular hexagons. In particular, we prove the following theorem.

\begin{thm}[\sc{Reduction to the regular case}]
    If the $1/3$-phenomenon holds for all positions $(x+2n,y+2n)$ inside the regular hexagon $H_{2n,2n,2n}$, then it is true for arbitrary positions inside the semiregular hexagon $H_{a+2n,b+2n,c+2n}$ for all $a,b,c\in\Z$. In other words, if $\mathbb{P}(0,0,0,x,y;n)$ is of the shape described in Theorem~\ref{1/3} for all $x,y\in\Z$ then so is $\mathbb{P}(a,b,c,x,y;n)$. 
\end{thm}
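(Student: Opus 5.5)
We plan to prove the reduction by linear recurrences in the side lengths, obtained from Fischer's triple-sum formula \cite{FischerHexaLoch} for the number of tilings of $H_{a,b,c}$ with a horizontal lozenge fixed at a given position. Dividing that formula by MacMahon's Box Formula expresses $\mathbb{P}(a,b,c,x,y;n)$ as a triple sum of proper hypergeometric terms in all parameters $a,b,c,x,y,n$. The goal is a first-order inhomogeneous recurrence in each side-length parameter separately. In $a$, for instance, we aim for
\[
\mathbb{P}(a+1,b,c,x,y;n)=\sum_{(i,j)} r_{i,j}(a,b,c,x,y,n)\,\mathbb{P}(a,b,c,x+i,y+j;n)+\rho(a,b,c,x,y,n),
\]
where $(i,j)$ ranges over a finite set of position shifts, the $r_{i,j}$ are rational functions, and $\rho$ is a rational function plus the inhomogeneous boundary terms coming from the finite summation ranges.

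The key structural facts are threefold. The constant $1/3$ must be compatible with the recurrence, i.e.\ $\sum_{i,j} r_{i,j}=1$ up to $\rho$, so that $1/3$ is carried along. The class of functions $\frac13+f(n)\binom{2n}{n}^3/\binom{6n+2}{3n+1}$ with $f$ rational is closed under multiplication by rational functions and under addition once the constants match. Finally, the boundary terms $\rho$ must themselves be of this shape, which is plausible because they are single hypergeometric terms in $n$ evaluated at summation endpoints.

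Concretely, we proceed as follows. First, we rewrite Fischer's formula in a form where the three summation indices have natural finite supports, and substitute $a\to a+2n$, and so on. Second, we apply \texttt{Zb} iteratively to each inner sum, or, if that fails, \texttt{Annihilator} and \texttt{CreativeTelescoping} to the full triple sum with operators $S_a,S_x,S_y$. This produces an annihilating operator for $\mathbb{P}$ of the form $S_a-\sum r_{i,j}S_x^iS_y^j$ together with explicit telescoping remainders. Third, we verify the relation rigorously via the certificates, which reduces to identities between rational functions. Fourth, we check that the remainder terms have the $\binom{2n}{n}^3/\binom{6n+2}{3n+1}$ form. By the symmetry of the hexagon under rotation and reflection, which permutes $a,b,c$ and transforms $(x,y)$ affinely, it suffices to handle $a$; the recurrences in $b$ and $c$ follow. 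Fifth, we run induction on $|a|+|b|+|c|$, stepping both upward and downward. The downward steps require either the analogous recurrence for $S_a^{-1}$ or solving for $\mathbb{P}(a,\dots)$ in terms of $\mathbb{P}(a+1,\dots)$. The base case $a=b=c=0$ with arbitrary $x,y$ is exactly the hypothesis. The threshold $N_{a,b,c,x,y}$ grows along the induction to avoid poles of the $r_{i,j}$ and to keep positions inside the hexagon.

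The main obstacle is the second step. Computing creative-telescoping relations for a triple sum with six parameters may exhaust resources. We also need the output to be first order in $S_a$ and free of $S_n$, since $n$-shifts would break the argument. If the direct approach fails, we would first set one side-length shift at a time and specialise symbolic parameters partially. Alternatively, we would find an operator mixing $S_a$ with position shifts by an ansatz with undetermined rational coefficients, verified by certificates. A secondary difficulty is showing that the boundary terms do not introduce hypergeometric factors other than $\binom{2n}{n}^3/\binom{6n+2}{3n+1}$.
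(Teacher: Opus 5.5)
\paragraph{Gap 1: your symmetry step fails for $c$.} The only lattice symmetries that keep horizontal lozenges horizontal can at most interchange $a$ and $b$. That is exactly the reflection the paper uses to get $\mathbb{P}(a,b,c,x,y;n)=\mathbb{P}(b,a,c,x',y';n)$. A rotation that moves $c$ into another slot turns the fixed horizontal lozenge into a left- or right-leaning one. So it does not carry a recurrence for $\mathbb{P}$ in $a$ over to one in $c$.

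The missing idea is a complementarity identity. Each unit triangle is covered by exactly one of the three lozenge types. After rotating, the two non-horizontal probabilities become horizontal ones in $H_{b,c,a}$ and $H_{c,a,b}$, which gives $\mathbb{P}(0,0,c,x,y;n)=1-\mathbb{P}(0,c,0,\bar x,\bar y;n)-\mathbb{P}(c,0,0,\tilde x,\tilde y;n)$. The constants work out as $1-\frac13-\frac13=\frac13$, and the right-hand side is handled by the $a$- and $b$-reductions. Without this, or a separately derived recurrence in $c$ carrying all the uncertainties you list for $a$, your reduction is incomplete.

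\paragraph{Gap 2: the boundary term.} The step you call the main obstacle is avoidable. The step you call plausible is where the real work lies, and your justification for it is wrong.

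Fischer's formula factors as $\frac{(c+2n)!}{(b+2n+1)_{c+2n}}\sum_{j=1}^{a+2n}I_{a,b,c,x,y}(j;n)\,F_{b,c,x,y}(j;n)$, with $F$ independent of $a$. Moreover $I_{a,\dots}(j;n)=I_{a-1,\dots}(j;n)$ for $j<a+2n$, because the new summand contains $1/(j-a-2n)!$. Hence, by Lemma~\ref{redA}, $\mathbb{P}(a,\dots)-\mathbb{P}(a-1,\dots)$ is the prefactor times $I_{a,\dots}(a+2n;n)F_{a,\dots}(a+2n;n)$. This is a first-order recurrence with $r_{0,0}=1$, no position shifts and no $S_n$, and it is automatically compatible with $\frac13$. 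It is obtained by hand, not by creative telescoping on a six-parameter triple sum.

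The inhomogeneous term, however, is not a single hypergeometric term evaluated at an endpoint. It is a product of two single sums. Proving that they equal $(-1)^n\frac{(3n+1)!}{(n!)^3}$ and $(-1)^n\frac{(2n)!(3n+1)!(4n)!}{(n!)^3(6n+2)!}$ times rational functions is the content of Proposition~\ref{IFshape}. There, order-two Zeilberger recurrences in each of $a,b,c,x,y$ reduce the claim, via Lemma~\ref{recpreserv}, to $2^5$ base cases per sum. Each base case is settled by guessing with \texttt{Rate} and checking against a Zeilberger recurrence in $n$. Your proposal gives no method for this step, so nothing rules out other hypergeometric factors in $\rho$.

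\paragraph{A minor point.} Downward induction is unnecessary. Replacing $n$ by $n-n_0$ and each parameter by itself plus $2n_0$ makes all parameters nonnegative, and the target shape is stable under this shift.
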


First, since Theorem~\ref{1/3} requires the specific form of $\mathbb{P}(a,b,c,x,y;n)$ only for sufficiently large $n$, we may assume that all parameters $a,b,c,x,y$ are non-negative. Suppose $c := \min\{a,b,c,x,y\} < 0$ and choose $n_0 \in \mathbb{N}$ such that $2n_0 \geq |c|$. Then, for all $n \ge n_0$, we have
\begin{align}
    \mathbb{P}(a,b,c,x,y;n)&=\mathbb{P}(\text{horizontal lozenge at }(x+2n,y+2n)\text{ in }H_{a+2n,b+2n,c+2n})\notag\\
    &=\mathbb{P}(\text{horizontal lozenge at }(x'+2n',y'+2n')\text{ in }H_{a'+2n',b'+2n',c'+2n'})\notag\\
    &=\mathbb{P}(a',b',c',x',y';n')\notag
\end{align}
where $a'=a+2n_0,\ b'=b+2n_0,\ c'=c+2n_0,\ x'=x+2n_0,\  y'=y+2n_0$ and $n'=n-n_0$ with $a',b',c',x',y'\geq 0$. Thus, if Theorem~\ref{1/3} holds for all positive choices of arguments, it also holds in general. Therefore, from now on we assume $a,b,c,x$ and $y$ to be non-negative.

We note that $\mathbb{P}(a,b,c,x,y)$ is known, numerically speaking. There are several formulas for it—most of them cumbersome triple sums—see, e.g., Petrov~\cite{Petrov}, Johansson~\cite{johannson,GUE}, and Gilmore~\cite{gilmoreInv}. For our purposes, the formula due to Ilse Fischer~\cite{FischerHexaLoch} is the most convenient to work with.
\begin{thm}[\cite{FischerHexaLoch}]\label{fischer}
    Let $a,b,c$ be positive integers and let $x,y$ be integers such that $0\leq x\leq b+a-1$ and $0\leq y\leq c+a-1$. Then the probability that a uniformly chosen random tiling of $H_{a,b,c}$ contains the horizontal lozenge tile with bottom-most vertex at $(x,y)$ is equal to
    \begin{align}
        \frac{c!}{(b+1)_c}\sum_{i=1}^{a}\sum_{j=1}^{a}\sum_{s=1}^{j}(-1)^{i+s}\binom{j-1}{s-1}&\binom{c+i+x-y-2}{x-1}\binom{b+s-x+y-1}{b+s-x-1}\notag\\
        &\cdot \frac{(b+1)_{s-1}(c+1)_{i-1}(b+c+i)_{j-i}}{(j-i)!(i-1)!(b+c+1)_{s-1}}.\notag
    \end{align}
    Here, $m_k=m(m+1)\dots(m+k-1)$ denotes the Pochhammer symbol.
\end{thm}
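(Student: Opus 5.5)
The approach is to encode tilings of $H_{a,b,c}$ as families of $a$ non-intersecting lattice paths and count them with the Lindström--Gessel--Viennot lemma. The event ``horizontal lozenge at $(x,y)$'' then becomes ``some path uses one prescribed step''. The probability becomes a bilinear expression in path counts, weighted by entries of the inverse of the LGV matrix. An explicit inverse of that binomial matrix should produce the triple sum.

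\emph{Step 1 (paths).} I would use the path families that cross the two sides of length $a$. Starting from each unit segment on one of these sides, I follow the chain of lozenges that contain edges parallel to that segment. This gives $a$ non-intersecting paths $A_i\to B_i$ ($1\le i\le a$). Each path has $b+c$ steps, of which $b$ are of one kind and $c$ are of the other. I would orient the paths so that one of these kinds is exactly a horizontal lozenge. Then $A_i$ has coordinates linear in $i$, and $P(A_i\to B_j)=\binom{b+c}{b+j-i}$. By LGV the number of tilings is $\det M$ with $M=(P(A_i\to B_j))_{i,j}$, in agreement with MacMahon's formula. A horizontal lozenge at $(x,y)$ corresponds to a single step $u\to v$ of the lattice, with $u,v$ determined by $(x,y)$.

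\emph{Step 2 (forcing the step).} The families in which some path uses $u\to v$ are counted as follows. Replace the step by a weight $t$ and differentiate $\det M(t)$ at $t=1$; equivalently, use the standard LGV refinement. Jacobi's formula then gives
\[
\mathbb P=\sum_{i,j=1}^{a} P(A_i\to u)\,P(v\to B_j)\,(M^{-1})_{j,i}.
\]
The two factors are single binomials. I expect them to be essentially $\binom{c+i+x-y-2}{x-1}$ and $\binom{b+s-x+y-1}{b+s-x-1}$, after relabelling the index of the endpoint by $s$.

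\emph{Step 3 (inverting $M$), the main obstacle.} The matrix $(\binom{b+c}{b+j-i})$ is a Toeplitz binomial matrix. I would compute an explicit LU factorisation $M=LU$ with hypergeometric entries. I would guess the entries by computing small cases, and then prove $LU=M$ by showing that the relevant single sums telescope via Zeilberger's algorithm or a Vandermonde-type identity. The inverse $M^{-1}=U^{-1}L^{-1}$ is then a single sum. This sum is the source of the inner summation over $s$ with $\binom{j-1}{s-1}(b+1)_{s-1}/(b+c+1)_{s-1}$, while $(c+1)_{i-1}(b+c+i)_{j-i}/((j-i)!(i-1)!)$ comes from the triangular factor.

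\emph{Step 4 (assembly).} Substituting into the formula of Step 2, the prefactor $c!/(b+1)_c$ comes from normalising by the diagonal of $U$, i.e.\ the ratio of $\det M$ to its minors. The remaining work is reindexing so that the summation ranges become $1\le s\le j\le a$ and $1\le i\le a$. I expect the delicate part, besides finding the right factorisation, to be the exact orientation and coordinate bookkeeping linking $(x,y)$ to $u,v$. To catch offset errors I would check the final formula against brute-force enumeration for small $a,b,c$.
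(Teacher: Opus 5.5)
The paper does not prove this statement. It quotes it from Fischer's article and uses it as input, so your outline has to stand on its own.

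\textbf{What works.} Weighting the edge $u\to v$ by $t$ turns the path matrix into $M+(t-1)pq^{T}$ with $p_i=P(A_i\to u)$ and $q_s=P(v\to B_s)$. At most one path of a non-intersecting family uses that edge, so $\mathbb P=q^{T}M^{-1}p$. This part is correct.

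\textbf{The gap.} Steps 1, 2 and 4 are mutually incompatible, so the plan as written does not reach the stated sum.
If $P(A_i\to B_j)=\binom{b+c}{b+j-i}$ for all $i,j$, every path has length $b+c$. Hence the $A_i$ lie on one level line, and every path from $A_i$ to a fixed $u$ has the same length. So $P(A_i\to u)$ has the form $\binom{N}{K\pm i}$, with $i$ in the lower entry.
The theorem's factor $\binom{c+i+x-y-2}{x-1}$ is different: the path length grows with $i$ while the number of steps of one kind stays fixed. The same holds for $\binom{b+s-x+y-1}{b+s-x-1}$ in $s$.
No reindexing turns one form into the other. Your Toeplitz route would give a correct but different triple sum. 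Identifying it with Fischer's is an additional identity, not bookkeeping; the two path matrices are related by $M=PTP^{T}$ with Pascal matrices $P$, via Chu--Vandermonde.

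\textbf{The fix.} The stated summand comes from the encoding in which the forced initial and final steps are appended. Then the start points lie on a line parallel to one step direction and the endpoints on a line parallel to the other.
\begin{itemize}
\item Take $E/N$-paths from $A_i=(1,1-i)$ to $B_s=(b+s,c)$, with the lozenge corresponding to the step $(x,c-y)\to(x+1,c-y)$ in suitable coordinates.
\item Then $p_i$ and $q_s$ are exactly the two binomials of the theorem, and $M_{i,s}=\binom{b+c+i+s-2}{b+s-1}$.
\item The theorem becomes precisely the assertion $M^{-1}=\kappa VW$, where $\kappa=c!/(b+1)_c=1/\binom{b+c}{b}$.
\item Here $V_{s,j}=(-1)^s\binom{j-1}{s-1}(b+1)_{s-1}/(b+c+1)_{s-1}$ is upper triangular and $W_{j,i}=(-1)^i(c+1)_{i-1}(b+c+i)_{j-i}/((j-i)!\,(i-1)!)$ is lower triangular.
\item The summation index $j$ is the internal index of this factorisation, not an endpoint index. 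The prefactor is simply its normalising constant, not a ratio of $\det M$ to minors.
\end{itemize}

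\textbf{Proving $M^{-1}=\kappa VW$.} You do not need to guess an LU factorisation of $M$ or invert its factors. Read $V$ and $W$ off the statement and check $MVW=\kappa^{-1}I$ directly.
\begin{itemize}
\item After cancelling factorials, $\sum_s M_{i,s}V_{s,j}$ is a $j$-independent multiple of $\sum_t(-1)^t\binom{j-1}{t}\binom{b+c+i-1+t}{i-1}=(-1)^{j-1}\binom{b+c+i-1}{i-j}$. So $MV$ is lower triangular with explicit entries.
\item In $(MV)_{i,j}W_{j,i'}$ the $j$-dependence reduces to $(-1)^j/((i-j)!\,(j-i'))!$.
\item The identity $\sum_m(-1)^m/(m!\,(n-m)!)=\delta_{n,0}$ then finishes the check.
\end{itemize}
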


Therefore, just by inserting the correct arguments in Fischers formula, we obtain the following expression for the quantity in question.
\begin{cor} \label{corform}
    We have
    \begin{align}
        \mathbb{P}&(a,b,c,x,y;n)=\notag \\
        &\frac{(c+2n)!}{(b+2n+1)_{c+2n}}\sum_{i=1}^{a+2n}\sum_{j=1}^{a+2n}\sum_{s=1}^{j}(-1)^{i+s}\binom{j-1}{s-1}\binom{c+2n+i+x-y-2}{x+2n-1}\notag\\
        &\hphantom{aaa} \cdot\binom{b+2n+s-x+y-1}{b+s-x-1} \frac{(b+2n+1)_{s-1}(c+2n+1)_{i-1}(b+c+4n+i)_{j-i}}{(j-i)!(i-1)!(b+c+4n+1)_{s-1}}.\notag
    \end{align}
\end{cor}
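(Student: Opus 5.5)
The plan is to obtain the formula by direct substitution into Theorem~\ref{fischer}; no new combinatorial input is needed. By definition, $\mathbb{P}(a,b,c,x,y;n)$ is the probability of a horizontal lozenge with bottom-most vertex at $(x+2n,y+2n)$ in a uniformly random tiling of $H_{a+2n,b+2n,c+2n}$. So I will apply Theorem~\ref{fischer} with the replacements
\[a\mapsto a+2n,\quad b\mapsto b+2n,\quad c\mapsto c+2n,\quad x\mapsto x+2n,\quad y\mapsto y+2n.\]

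The first step is to check that the hypotheses of Theorem~\ref{fischer} hold. By the reduction above we may assume $a,b,c,x,y\ge 0$, so for $n\ge 1$ the side lengths $a+2n$, $b+2n$, $c+2n$ are positive integers. For the position we need
\[0\le x+2n\le a+b+4n-1 \quad\text{and}\quad 0\le y+2n\le a+c+4n-1.\]
The lower bounds are immediate. The upper bounds hold as soon as $2n\ge x-a-b+1$ and $2n\ge y-a-c+1$. This is harmless because Theorem~\ref{1/3} only concerns $n\ge N_{a,b,c,x,y}$, so we may enlarge $N_{a,b,c,x,y}$ if necessary.

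The second step is to carry out the substitution factor by factor.
\begin{itemize}
\item The prefactor $c!/(b+1)_c$ becomes $(c+2n)!/(b+2n+1)_{c+2n}$.
\item The summation ranges become $1\le i,j\le a+2n$ and $1\le s\le j$.
\item In $\binom{c+i+x-y-2}{x-1}$ the difference $x-y$ is unchanged, so it becomes $\binom{c+2n+i+x-y-2}{x+2n-1}$.
\item In $\binom{b+s-x+y-1}{b+s-x-1}$ the difference $b-x$ is unchanged while $y$ gains $2n$, giving $\binom{b+2n+s-x+y-1}{b+s-x-1}$.
\item The Pochhammer symbols become $(b+2n+1)_{s-1}$, $(c+2n+1)_{i-1}$, $(b+c+4n+i)_{j-i}$ and $(b+c+4n+1)_{s-1}$.
\item The factors $(-1)^{i+s}$, $\binom{j-1}{s-1}$, $(j-i)!$ and $(i-1)!$ are independent of the substituted parameters and stay as they are.
\end{itemize}

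Collecting these factors gives exactly the stated triple sum. There is no real obstacle here. The only point needing care is the range condition on $(x+2n,y+2n)$, which is why the identity is stated, and will be used, only for $n$ sufficiently large.
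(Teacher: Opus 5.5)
Your proposal is correct and is exactly the paper's argument: the paper obtains the corollary by substituting $a,b,c,x,y\mapsto a+2n,b+2n,c+2n,x+2n,y+2n$ into Fischer's formula (Theorem~\ref{fischer}), and your factor-by-factor check reproduces the stated expression. You also verify the range conditions $0\le x+2n\le a+b+4n-1$ and $0\le y+2n\le a+c+4n-1$ for $n$ large, which the paper leaves implicit; this is consistent with the fact that the corollary is only ever used for $n$ large.
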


Our goal is to use this expression for $\mathbb{P}(a,b,c,x,y;n)$ to develop recursive arguments that allow us to reduce the parameters $a,b$ and $c$ to zero, one by one, while showing that all this reduction steps preserve the 1/3-phenomenon. As a first central insight, we notice that the formula presented in Theorem~\ref{fischer} is actually a sum of products. Namely, we compute
\begin{align}
     &\frac{c!}{(b+1)_c}\sum_{i=1}^{a}\sum_{j=1}^{a}\sum_{s=1}^{j}(-1)^{i+s}\binom{j-1}{s-1}\binom{c+i+x-y-2}{x-1}\binom{b+s-x+y-1}{b+s-x-1}\notag\\
        &\hphantom{aaaaaaaaaaaaaaaaaaaaa}\cdot \frac{(b+1)_{s-1}(c+1)_{i-1}(b+c+i)_{j-i}}{(j-i)!(i-1)!(b+c+1)_{s-1}}\notag\\
         &=\frac{c!}{(b+1)_c}\sum_{j=1}^{a}\Bigg(\sum_{i=1}^{a}(-1)^{i}\binom{c+i+x-y-2}{x-1}\frac{(c+1)_{i-1}(b+c+i)_{j-i}}{(j-i)!(i-1)!}   \Bigg)\notag\\
        &\hphantom{aaaaaaaaaaaa}\cdot \Bigg(\sum_{s=1}^{j} (-1)^s\binom{j-1}{s-1}\binom{b+s-x+y-1}{b+s-x-1}\frac{(b+1)_{s-1}}{(b+c+1)_{s-1}}     \Bigg).\notag
\end{align}

Hence, if we set
\[I_{a,b,c,x,y}(j;n):=\sum_{i=1}^{a+2n}(-1)^{i}\binom{c+2n+i+x-y-2}{x+2n-1}\frac{(c+2n+1)_{i-1}(b+c+4n+i)_{j-i}}{(j-i)!(i-1)!}\] 
and
\[F_{a,b,c,x,y}(j;n):=\sum_{s=1}^{j} (-1)^s\binom{j-1}{s-1}\binom{b+2n+s-x+y-1}{b+s-x-1}\frac{(b+2n+1)_{s-1}}{(b+c+4n+1)_{s-1}}\]
by Corollary~\ref{corform} we obtain
\begin{align}
    \mathbb{P}(a,b,c,,x,y;n)=\frac{(c+2n)!}{(b+2n+1)_{c+2n}}\sum_{j=1}^{a+2n} I_{a,b,c,x,y}(j;n)\cdot F_{a,b,c,x,y}(j;n). \label{IFform}
\end{align}

Knowing this, we are ultimately able to start the reduction of the parameters.

\subsection{Reduction of the parameter $a$}
The reduction of the first parameter turns out to be the most involved part. The essential idea is to develop linear recurrences with respect to the single parameters and then prove that these recurrences preserve the shape of the formula we are looking for. Sometimes, we are able to exhibit these recurrences by hand, however at some point a repeated application of the famous Zeilberger Algorithm as described in \cite{AB} via a computer algebra system becomes inevitable. 

An initial key result in this section is the following lemma.

\begin{lem}\label{redA}
    For all $a>0$, all $b,c,x,y$ non-negative integers and $n\in\N$ large enough it holds that
    \begin{align}
        \mathbb{P}(a,b,c,x,y;n)&-\mathbb{P}(a-1,b,c,x,y;n)=\notag\\
        &\frac{(c+2n)!}{(b+2n+1)_{c+2n}}I_{a,b,c,x,y}(a+2n;n)\cdot F_{a,b,c,x,y}(a+2n;n).\notag
    \end{align}
\end{lem}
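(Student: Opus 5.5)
The plan is to read the identity off the factorised representation \eqref{IFform}, by checking which ingredients actually depend on $a$. First I will make sure \eqref{IFform} applies to both $\mathbb{P}(a,b,c,x,y;n)$ and $\mathbb{P}(a-1,b,c,x,y;n)$. For this I take $n$ large enough that the hypotheses of Theorem~\ref{fischer} hold for both hexagons $H_{a+2n,b+2n,c+2n}$ and $H_{a-1+2n,b+2n,c+2n}$. Concretely this means $a-1+2n\geq 1$, $0\le x+2n\le a-1+b+4n-1$ and $0\le y+2n\le a-1+c+4n-1$, which all hold once $n$ is large in terms of $a,b,c,x,y$.

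Next I compare the two sides of \eqref{IFform} term by term.
\begin{itemize}
\item The prefactor $\frac{(c+2n)!}{(b+2n+1)_{c+2n}}$ does not involve $a$.
\item The factor $F_{a,b,c,x,y}(j;n)$ does not involve $a$ either, since its summation range $1\le s\le j$ and its summand are free of $a$. Hence $F_{a,b,c,x,y}(j;n)=F_{a-1,b,c,x,y}(j;n)$ for every $j$.
\end{itemize}
So the whole $a$-dependence sits in the upper limit $a+2n$ of the $j$-sum and in the upper limit of the $i$-sum defining $I$.

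The key step is to show that $I_{a,b,c,x,y}(j;n)=I_{a-1,b,c,x,y}(j;n)$ for all $1\le j\le a-1+2n$. The two sums differ only by the summand with $i=a+2n$, and for $j<i$ this summand vanishes. The reason is that it contains the factor
\[
\frac{(b+c+4n+i)_{j-i}}{(j-i)!}.
\]
Here $1/(j-i)!=0$ for negative $j-i$. The Pochhammer symbol with negative index $-k$, $k=i-j$, equals
\[
\frac{1}{(m-1)(m-2)\cdots(m-k)}, \qquad m=b+c+4n+i,
\]
and this is finite because $m-k=b+c+4n+j\ge 1$. So the product is $0$, exactly as in Fischer's original triple sum, where terms with $i>j$ drop out by the same convention.

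Subtracting the two instances of \eqref{IFform}, all terms with $j\le a-1+2n$ then cancel. What remains is the single term $j=a+2n$, which is the claimed right-hand side $\frac{(c+2n)!}{(b+2n+1)_{c+2n}}I_{a,b,c,x,y}(a+2n;n)\,F_{a,b,c,x,y}(a+2n;n)$.

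The only delicate point, which I expect to be the main (mild) obstacle, is making the convention for $(m)_{j-i}/(j-i)!$ with $j<i$ precise and consistent with Theorem~\ref{fischer}. I would handle it by rewriting the quotient as $\binom{m+j-i-1}{j-i}$-type expressions, or simply by noting that Fischer's sum is meant with these terms equal to zero. I would also record the explicit bound on $n$ coming from the range conditions above.
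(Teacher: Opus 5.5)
Your proposal is correct and follows essentially the same route as the paper. Both arguments use the same three observations in \eqref{IFform}: the prefactor and $F$ are free of $a$; $I_{a,b,c,x,y}(j;n)$ and $I_{a-1,b,c,x,y}(j;n)$ differ only by the $i=a+2n$ summand, which vanishes for $j<a+2n$ because of the factor $1/(j-i)!$; hence only the $j=a+2n$ term survives. Your additional checks — that $(b+c+4n+i)_{j-i}$ stays finite, and that the range hypotheses of Theorem~\ref{fischer} hold for both hexagons — make explicit points the paper passes over.
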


\begin{proof}
    First, recognise that $F_{a,b,c,x,y}(j;n)$ is in fact not dependent on $a$. Thus, we might as well just write $F_{b,c,x,y}(j;n)$. Nevertheless, things are different for the function $I_{a,b,c,x,y}(j;n)$. Here we observe
    \begin{align}
        I_{a,b,c,x,y}&(j;n)=\sum_{i=1}^{a+2n}(-1)^{i}\binom{c+2n+i+x-y-2}{x+2n-1}\frac{(c+2n+1)_{i-1}(b+c+4n+i)_{j-i}}{(j-i)!(i-1)!}\notag \\
        =&\sum_{i=1}^{a-1+2n}(-1)^{i}\binom{c+2n+i+x-y-2}{x+2n-1}\frac{(c+2n+1)_{i-1}(b+c+4n+i)_{j-i}}{(j-i)!(i-1)!}\notag\\
        &+(-1)^{a+2n}\binom{a+c+4n+x-y-2}{x+2n-1}\frac{(c+2n+1)_{a+2n-1}(a+b+c+6n)_{j-a-2n}}{(j-a-2n)!(a+2n-1)!}.\notag
    \end{align}
    See that the sum at the end of the equation just equals $I_{a-1,b,c,x,y}(j;n)$. However the last summand is special. By properties of factorials of negative integers, it vanishes whenever $j<a+2n$ since then
    \[(j-a-2n)!=\infty.\]
    Therefore, we have $I_{a,b,c,x,y}(j;n)=I_{a-1,b,c,x,y}(j;n)$ unless $j=a+2n$. Using this and Formula~\eqref{IFform} we compute
    \begin{align}
        \mathbb{P}(a,b,c,x,y;n)&-\mathbb{P}(a-1,b,c,x,y;n)=\notag\\
        &\frac{(c+2n)!}{(b+2n+1)_{c+2n}}\Bigg(I_{a,b,c,x,y}(a+2n;n)\cdot F_{b,c,x,y}(a+2n,n) +\hphantom{a}\notag \\
        &\hphantom{aaaaaaa}\sum_{j=1}^{a-1+2n}\Big(\underbrace{I_{a,b,c,x,y}(j;n)-I_{a-1,b,c,x,y}(j;n)}_{=0}\Big)F_{b,c,x,y}(j;n)\Bigg)\notag\\
        &=\frac{(c+2n)!}{(b+2n+1)_{c+2n}}I_{a,b,c,x,y}(a+2n;n)\cdot F_{b,c,x,y}(a+2n,n) \notag
    \end{align}
    as desired.
\end{proof}

Lemma~\ref{redA} is the key to the reduction of the problem to the case where $a=0$. Namely, if we suppose Theorem~\ref{1/3} holds and therefore $\mathbb{P}(a,b,c,x,y;n)$ equals $1/3$ plus some nice expression, as well as $\mathbb{P}(a-1,b,c,x,y;n)$ equals $1/3$ plus some nice expression, then especially their difference will be of similar kind. On the contrary, if we prove that the difference between those two placement probabilities is of nice shape, meaning we show
\[\frac{(c+2n)!}{(b+2n+1)_{c+2n}}I_{a,b,c,x,y}(a+2n;n)\cdot F_{a,b,c,x,y}(a+2n;n)=g_{a,b,c,x,y}(n)\binom{2n}{n}^3\bigg/\binom{6n+2}{3n+1}\]
for some rational function $g_{a,b,c,x,y}(n)$ in $n$, then we can inductively show $\mathbb{P}(a,b,c,x,y;n)$ fulfils Theorem~\ref{1/3} if $\mathbb{P}(0,b,c,x,y;n)$ satisfies it. Hence, our next big step is to verify the proposition below.

\begin{prop}\label{IFshape}
    For all fixed $a,b,c,x,y\in \N\cup\{0\}$ and $n\in \N\cup\{0\}$ large enough we have the following.
    \begin{itemize}
        \item[(i)] There exists a rational function $q_{a,b,c,x,y}(n)$ in $n$ such that
        \[I_{a,b,c,x,y}(a+2n;n)=(-1)^n\frac{(3n+1)!}{(n!)^3}\cdot q_{a,b,c,x,y}(n).\]
        \item[(ii)] There exists a rational function $r_{a,b,c,x,y}(n)$ in $n$ such that
        \[F_{a,b,c,x,y}(a+2n;n)=(-1)^n\frac{(2n)!(3n+1)!(4n)!}{(n!)^3 (6n+2)!}\cdot r_{a,b,c,x,y}(n).\]
    \end{itemize}
\end{prop}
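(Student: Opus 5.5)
The plan is to treat both parts in the same way. Each of $I_{a,b,c,x,y}(a+2n;n)$ and $F_{a,b,c,x,y}(a+2n;n)$ is a single hypergeometric sum in one summation variable ($i$, resp.\ $s$), with $n$ and the five parameters as free variables. The idea is to show that each sum is a hypergeometric term in $n$ times a rational function of $n$, by induction on the parameters. We start from a base case and move along each parameter with contiguous-type relations. The key observation is that if $T(n)$ is a hypergeometric term (here $T(n)=(-1)^n(3n+1)!/(n!)^3$, resp.\ $(-1)^n(2n)!(3n+1)!(4n)!/((n!)^3(6n+2)!)$), then $S_n^kT(n)=\rho_k(n)T(n)$ with $\rho_k$ rational. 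Hence any operator $\sum_k c_k(n)S_n^k$ with rational coefficients maps the class $\{T(n)\cdot\text{rational}\}$ into itself.

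\textbf{Step 1: natural boundaries.} First rewrite the sums so that the summation range is independent of $n$. In $I(a+2n;n)$ the factor $1/(a+2n-i)!$ vanishes for $i>a+2n$, and in $F$ the factor $\binom{j-1}{s-1}$ vanishes for $s>j$, so both can be regarded as sums over all $i$ (resp.\ $s$). This makes Zeilberger's algorithm and \texttt{Annihilator} produce homogeneous recurrences, since the certificates have finite support. For the binomials $\binom{c+2n+i+x-y-2}{x+2n-1}$ and $\binom{b+2n+s-x+y-1}{b+s-x-1}$ I will use their standard extension to negative entries. I will check for large $n$ that no spurious boundary terms appear; this is why only "$n$ large enough" is claimed.

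\textbf{Step 2: base case.} For $a=b=c=x=y=0$, apply \texttt{Zb} to the summand in $n$ with order $1$. I expect a first-order recurrence $p_0(n)\,\mathrm{SUM}[n]+p_1(n)\,\mathrm{SUM}[n+1]=0$. Comparing $p_0/p_1$ with $T(n+1)/T(n)$ then shows that the quotient $\mathrm{SUM}[n]/T(n)$ satisfies a first-order recurrence with rational coefficients. Together with one initial value, this gives $\mathrm{SUM}[n]=T(n)\cdot(\text{rational})$. If no first-order recurrence appears, I would instead evaluate the sum by Chu--Vandermonde or Pfaff--Saalsch\"utz after rewriting the Pochhammer symbols as binomials.

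\textbf{Step 3: parameter shifts.} For each parameter $p\in\{a,b,c,x,y\}$, compute with \texttt{Annihilator} (with operators $S_a,S_b,S_c,S_x,S_y,S_n$) and then \texttt{CreativeTelescoping} an operator in the annihilating ideal of the form
\[
S_p-\sum_{k=0}^{d}c_{k}(a,b,c,x,y,n)\,S_n^k ,
\]
that is, a relation expressing the sum at $p+1$ as a rational-coefficient combination of $n$-shifts of the sum at $p$. A Gr\"obner basis in an ordering that eliminates $S_p$ in favour of $S_n$ should deliver such an element. Given such relations, induction on $a+b+c+x+y$ immediately proves (i) and (ii). Specialising the parameters leaves each $c_k$ rational in $n$, and for fixed parameters the leading denominators vanish only at finitely many $n$; this is again absorbed by "$n$ large enough".

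\textbf{Main obstacle.} I expect Step 3 to be the main difficulty, on two counts. First, it is not a priori clear that $S_p$ lies in the left ideal generated by $\mathbb{Q}(a,b,c,x,y,n)[S_n]$ modulo the annihilator. If it does not, I would fall back on finding a recurrence in $p$ with operator coefficients in $S_n$; the same closure argument still applies, provided the leading coefficient in $S_p$ is a pure rational function. Second, the computation is large, with five symbolic parameters plus $n$. If it is too heavy, I would treat the parameters one at a time, keeping the others symbolic only where they appear linearly.
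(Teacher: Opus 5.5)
Your route differs from the paper's. The paper never mixes $S_p$ with $S_n$. Instead, \texttt{Zb} gives a pure three-term recurrence in each of $a,b,c,x,y$ separately, whose leading coefficients, such as $(1+a+2n)(2+a+c+2n-y)$, visibly involve $n$. Lemma~\ref{recpreserv} then reduces (i) and (ii) each to the $2^5$ corner cases $(a,b,c,x,y)\in\{0,1\}^5$. Each corner is settled by guessing the rational factor with \texttt{Rate} and checking it against the order-$2$ Zeilberger recurrence in $n$ plus initial values. Your plan needs only the origin as base case, but that rests entirely on Step 3, and Step 3 is left open: you do not show that an operator $S_p-\sum_k c_kS_n^k$ lies in the annihilator. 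Your fallback does not repair this. A relation of order $d\ge 2$ in $S_p$, even with $S_n$-coefficients, needs $d$ starting values in every parameter direction, so a single base case no longer suffices. With $d=2$ you are back at the $2^5$ corners, which is where the paper's actual work lies. Your Step~2 method (an order-one \texttt{Zb} call or a classical summation) is not guaranteed to handle those corners.

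The missing step can be supplied along your lines. Both sums are terminating ${}_3F_2(1)$ series. For instance, $I_{a,b,c,x,y}(a+2n;n)$ is a hypergeometric prefactor times ${}_3F_2(c+2n+x-y,\,c+2n+1,\,1-a-2n;\,c-y+1,\,b+c+4n+1;\,1)$. Any three contiguous ${}_3F_2(1)$ series are linearly related, so the module generated by the sum has rank at most $2$ over $\mathbb{Q}(a,b,c,x,y,n)$. If the sum $f$ is not hypergeometric in $n$ for generic parameters, then $f$ and $S_nf$ span this module, giving $S_pf=c_0f+c_1S_nf$ with $c_0,c_1$ rational. You must still check that, at the integer tuples on your induction path, the denominators of $c_0,c_1$ do not vanish identically in $n$. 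Factors depending only on the parameters (for instance, lower ${}_3F_2$ parameters like $c-y+1$) cannot be excluded a priori, so your claim that they vanish ``only at finitely many $n$'' is not automatic. In Step~2, a recurrence $g(n+1)=R(n)g(n)$ with $R$ rational makes $g=f/T$ rational only if $R(n)=r(n+1)/r(n)$ for some rational $r$, and this must be verified. Also, your classical fallback does not apply directly: at the origin the series is ${}_3F_2(2n,2n+1,1-2n;1,4n+1;1)$, which is not balanced.
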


In the proof of the proposition we will apply the following easy lemma at several occasions.

\begin{lem} \label{recpreserv}
    Let $\big(F_k(n)\big)_{k\in\N\cup\{0\}}$ be a family of functions, which satisfies a linear recurrence of degree $d\in\N$ with rational coefficients, i.e. we have
    \[0=p_{k,0}(n)F_k(n)+p_{k,1}(n)F_{k-1}(n)+\dots +p_{k,d}(n)F_{k-d}(n)\]
    where $p_{k,i}(n)$ are rational functions with $p_{k,0}(n)\neq 0$. If there is an (arbitrary) function $C(n)$ in $n$ such that there exists a rational function $q_i(n)$ with
    \[F_i(n)=C(n)\cdot q_i(n)\]
    for all $i=0,1,2,\dots, d-1$, then there exist rational functions $q_j(n)$ such that \\$F_j(n)=C(n)\cdot q_j(n)$ for all $j\in \N\cup\{0\}$.
\end{lem}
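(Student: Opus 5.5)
The plan is a strong induction on the index $j$. The base cases $j=0,1,\dots,d-1$ hold by hypothesis. Now fix $k\geq d$ and assume that for every $i<k$ there is a rational function $q_i(n)$ with $F_i(n)=C(n)\,q_i(n)$.

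Since $p_{k,0}(n)$ is a nonzero rational function, we may divide the recurrence by it and obtain
\[
F_k(n)=-\sum_{m=1}^{d}\frac{p_{k,m}(n)}{p_{k,0}(n)}\,F_{k-m}(n)=C(n)\cdot\Bigg(-\sum_{m=1}^{d}\frac{p_{k,m}(n)}{p_{k,0}(n)}\,q_{k-m}(n)\Bigg).
\]
The second equality uses the induction hypothesis, since each index $k-m$ lies in $\{k-d,\dots,k-1\}$. Because rational functions are closed under sums, products and division by nonzero elements, the bracketed expression is a rational function. We take it as $q_k(n)$, which completes the induction step.

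The only point needing care is in what sense the identities hold. The identities $F_i(n)=C(n)q_i(n)$ and the recurrence are equalities of functions of $n$, valid for all sufficiently large $n$. A nonzero rational function has only finitely many zeros and poles. So $p_{k,0}(n)\neq 0$ and all the $p_{k,m}$ are defined for $n$ large enough, where the threshold may depend on $k$. The identity for $F_k$ therefore holds for $n$ beyond a $k$-dependent bound, which is exactly the ``$n$ large enough'' form used in Proposition~\ref{IFshape}.

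I do not expect any real obstacle here. The argument is formal once this threshold issue is stated explicitly.
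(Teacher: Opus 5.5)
Your proof is correct and is the same strong induction the paper uses: divide the recurrence by the nonzero $p_{k,0}(n)$, substitute $F_{k-m}(n)=C(n)\,q_{k-m}(n)$, and take the resulting bracketed rational function as $q_k(n)$. Your remark that the ``$n$ large enough'' threshold may depend on $k$ (which is harmless, since each application fixes the parameter) is a detail the paper leaves implicit.
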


\begin{proof}[Proof of the lemma]
    Just by induction on $k$ we have
    \begin{align}
        F_k(n)&=\frac{-1}{p_{k,0}(n)}\Big(p_{k,1}(n)F_{k-1}(n)+\dots +p_{k,d}(n)F_{k-d}(n)\Big)\notag\\
        &=\frac{-1}{p_{k,0}(n)}\Big(p_{k,1}(n)\cdot C(n)\cdot q_{k-1}(n)+\dots +p_{k,d}(n)\cdot C(n)\cdot q_{k-d}(n)\Big)\notag\\
        &=C(n)\cdot\bigg(-\frac{p_{k,1}(n)\cdot q_{k-1}(n)+\dots +p_{k,d}(n)\cdot q_{k-d}(n)}{p_{k,0}(n)}\bigg),\notag
    \end{align}
    where the expression in the parenthesis defines the rational function $q_k(n)$.
\end{proof}

Next comes the proof of Proposition~\ref{IFshape} which contains the promised application of the Zeilberger Algorithm.

\begin{proof}[Proof of Proposition \ref{IFshape}]
    Given a function $H(l)=\sum_{k=0}^\infty Z(l,k)$ where all the summands $Z(l,k)$ are proper hypergeometric terms the Zeilberger algorithm delivers a linear recurrence with polynomial coefficients for $H(l)$ in $l$. Implemented in a computer it does this automatically using symbolic computations. 

    In our case, it is easy to check, that both $I_{a,b,c,x,y}(a+2n;n)$ and $F_{a,b,c,x,y}(a+2n;n)$ are hypergeometric in all their arguments. Hence, we apply Zeilbergers algorithm to
    \begin{align}
        I_{a,b,c,x,y}&(a+2n;n)=\notag\\ 
        &\sum_{i=1}^{a+2n}(-1)^{i}\binom{c+2n+i+x-y-2}{x+2n-1}\frac{(c+2n+1)_{i-1}(b+c+4n+i)_{a+2n-i}}{(a+2n-i)!(i-1)!}\notag\\
        =&\sum_{i=1}^{\infty}(-1)^{i}\binom{c+2n+i+x-y-2}{x+2n-1}\frac{(c+2n+1)_{i-1}(b+c+4n+i)_{a+2n-i}}{(a+2n-i)!(i-1)!}\notag
    \end{align}
    and
    \begin{align}
        F_{a,b,c,x,y}&(a+2n;n)=\notag \\ &\sum_{s=1}^{a+2n} (-1)^s\binom{a+2n-1}{s-1}\binom{b+2n+s-x+y-1}{b+s-x-1}\frac{(b+2n+1)_{s-1}}{(b+c+4n+1)_{s-1}}\notag\\
        =&\sum_{s=1}^{\infty} (-1)^s\binom{a+2n-1}{s-1}\binom{b+2n+s-x+y-1}{b+s-x-1}\frac{(b+2n+1)_{s-1}}{(b+c+4n+1)_{s-1}}\notag
    \end{align}
    with respect to all parameters $a,b,c,x$ and $y$. We are using the algorithm \texttt{Zb[...]} from \texttt{fastZeil.m} by Peter Paule and Markus Schorn from the Research Institute for Symbolic Computation (RISC) of the Johannes Kepler University of Linz \cite{Paule}.
    Doing so, we obtain the following recurrences for $I_{a,b,c,x,y}(a+2n;n)$.
    \begin{itemize}
        \item With respect to the parameter $a$ we get
        \begin{align}
            &(a+b+c+6n)(a+b+2n-x)\cdot I_{a,b,c,x,y}(a+2n;n)+\hphantom{a}   \label{arec}\\
            &(-1-3a-2a^2-2b-2ab-c-2ac-bc-10n - 12 a n - 4 b n - 4 c n - 12 n^2 + x + \hphantom{a}\notag\\
            &a x + c x +4 n x + a y + b y + 4 n y)\cdot I_{a+1,b,c,x,y}(a+1+2n;n) + \hphantom{a}\notag\\
            &(1 + a + 2 n) (2 + a + c + 2 n - y)\cdot I_{a+2,b,c,x,y}(a+2+2n;n) =0\notag
        \end{align}
        with certificate 
        \[R_{I,a}(a,b,c,x,y,n,i)=\frac{(-1 + i) (-1 + b + c + i + 4 n) (a + b + c + 6 n) (-c - i + y)}{(1 + a - i + 2 n) (2 + a - i + 2 n)}\]
        \item With respect to the parameter $b$ we have
        \begin{align}
            &(a + b + c + 6 n) (a + b + 2 n - x)\cdot I_{a,b,c,x,y}(a+2n;n)+\hphantom{a} \label{brec}\\
            &(-1 - a - 2 b - 2 a b - 2 b^2 - c - b c - 6 n - 4 a n - 12 b n - 
  12 n^2 + x + a x + 2 b x + c x + \hphantom{a}\notag\\
            &8 n x - a y - b y - 4 n y)\cdot I_{a,b+1,c,x,y}(a+2n;n)+\hphantom{a}\notag\\
            &(1 + b + 2 n) (1 + b + 2 n - x + y)\cdot I_{a,b+2,c,x,y}(a+2n;n)=0\notag
        \end{align}
        with certificate
        \[R_{I,b}(a,b,c,x,y,n,i)=\frac{(-1 + i) (a + b + c + 6 n) (-c - i + y)}{b + c + i + 4 n}\]
        \item With respect for the parameter $c$ we obtain
        \begin{align}
            &(a + b + c + 6 n) (1 + c + 2 n + x - y)\cdot I_{a,b,c,x,y}(a+2n;n)+\hphantom{a}\\
            &(-3 - 3 a - 2 b - 5 c - 2 a c - b c - 2 c^2 - 18 n - 4 a n - 12 c n - 
  12 n^2 - x - a x - c x \notag\\
            &- 4 n x +2 y + a y + b y + 2 c y + 8 n y)\cdot I_{a,b,c+1,x,y}(a+2n;n)+\hphantom{a}\notag\\
            &(2 + c + 2 n) (2 + a + c + 2 n - y)\cdot I_{a,b,c+2,x,y}(a+2n;n)=0\notag
        \end{align}
        with certificate 
        \begin{align}
            R_{I,c}(a,&b,c,x,y,n,i)=\frac{(-1+i)}{(1 + c + 2 n) (2 + c + 2 n) (b + c + i + 4 n) (-1 - c - i + y)}\notag\\
            &\cdot\big(i^2 (a + 2 n) (2 + c + 2 n) (a + b + c + 6 n) + 
            i (2 + c + 2 n) (a + b + c + 6 n) \notag\\
            &\cdot(-1 - 2 b - c + 2 a c - b c - 
            4 n + 4 a n + 4 c n + 12 n^2 + x + a x \notag\\
            &+ c x + 4 n x - a y + 
            b y) \notag\\
            &+ (2 + c + 2 n) (a + b + c + 6 n) (-b - c - 3 b c - c^2 + 
            a c^2 - b c^2 - 4 n - 4 b n \notag\\
            &- 8 c n + 4 a c n + 2 c^2 n - 8 n^2 + 
            4 a n^2 + 4 b n^2 + 16 c n^2 + 24 n^3 - b x + c x + a c x \notag\\
            &+ c^2 x + 2 a n x + 2 b n x + 6 c n x + 12 n^2 x + 2 b y - a c y + 
            2 b c y + 4 n y - 2 a n y + 2 b n y \notag\\
            &+ 2 c n y + b x y + 2 n x y - 
            b y^2 - 2 n y^2)\big)\notag.
        \end{align}
        \item With respect to $x$ we have
        \begin{align}
            &(1 + c + 2 n + x - y) (-1 + b + 2 n - x + y)\cdot I_{a,b,c,x,y}(a+2n;n)+\hphantom{a}\\
            &(3 - a - 2 b + c - b c + 2 n - 4 b n - 4 n^2 + 5 x - a x - 2 b x + 
  c x + 2 x^2 - 2 y + \hphantom{a}\notag\\
            &a y + b y - 2 x y)\cdot I_{a,b,c,x+1,y}(a+2n;n)+\hphantom{a}\notag\\
            &(-2 + a + b + 2 n - x) (1 + 2 n + x)\cdot I_{a,b,c,x+2,y}(a+2n;n)=0\notag
        \end{align}
        with certificate
        \[R_{I,x}(a,b,c,x,y,n,i)=\frac{(-1 + i) (-1 + b + c + i + 4 n) (-c - i + y)}{2 n + x}.\]
        \item And finally for the parameter $y$ we obtain
        \begin{align}
            &(a + c + 2 n - y)  (2 n + y)\cdot I_{a,b,c,x,y}(a+2n;n)+\hphantom{a}\\
            &(1 - a - c - b c - 2 n - 4 c n - 4 n^2 - x + a x + c x + 2 y - a y + \hphantom{a}\notag\\
            &b y - 2 c y - 2 x y + 2 y^2)\cdot I_{a,b,c,x,y+1}(a+2n;n)+\hphantom{a}\notag\\
            &(-1 + c + 2 n + x - y) (1 + b + 2 n - x + y)\cdot I_{a,b,c,x,y+2}(a+2n;n)=0\notag
        \end{align}
        with certificate
        \[R_{I,y}(a,b,c,x,y,n,i)=\frac{(-1 + i) (-1 + b + c + i + 4 n) (-1 + 2 n + x) (-c - i + y)}{(1 - c - i - 2 n - x + y) (2 - c - i - 2 n - x + y)}\]
    \end{itemize}
    For the function $F_{a,b,c,x,y}(a+2n;n)$ we compute another set of five different recurrences using Zeilberger's algorithm.
    \begin{itemize}
        \item For the parameter $a$ we have
        \begin{align}
            &(a + 2 n) (-1 + a + c + 2 n - y)\cdot F_{a,b,c,x,y}(a+2n;n)+\hphantom{a}\\
            &(-a - 2 a^2 - 2 a b - c - 2 a c - b c - 2 n - 12 a n - 4 b n - 
                 4 c n - 12 n^2 + a x + c x + 4 n x + \hphantom{a}\notag\\
            &y + a y + b y + 4 n y)\cdot F_{a+1,b,c,x,y}(a+1+2n;n)+\hphantom{a}\notag\\
            &(1 + a + b + c + 6 n) (1 + a + b + 2 n - x)\cdot F_{a+2,b,c,x,y}(a+2+2n;n)=0\notag
        \end{align}
        with certificate
        \[R_{F,a}(a,b,c,x,y,n,s)=\frac{(-1 + s) (a + 2 n) (-1 + b + c + s + 4 n) (1 - b - s + x)}{(1 + a - k + 2 n) (2 + a - k + 2 n)}.\]
        \item For the parameter $b$ we observe
        \begin{align}
            &(1 + b + c + 4 n) (2 + b + c + 4 n) (1 + b + 2 n - x + y)\cdot F_{a,b,c,x,y}(a+2n;n)+\hphantom{a}\\
            &-(2 + b + c + 4 n)  (2 + 3 a + 4 b + 2 a b + 2 b^2 + c + b c + 14 n + 
             4 a n + 12 b n + 12 n^2+\hphantom{a}\notag\\
             &- 2 x - a x - 2 b x - c x - 8 n x + y + a y + b y + 4  n  y)\cdot F_{a,b+1,c,x,y}(a+2n;n)+\hphantom{a}\notag\\
            &(2 + b + 2 n) (1 + a + b + c + 6 n) (1 + a + b + 2 n - x)\cdot F_{a,b+2,c,x,y}(a+2n;n)=0\notag
        \end{align}
        with certificate
        \begin{align}
        R_{F,b}&(a,b,c,x,y,n,s)=\frac{(-1+s)}{(1 + b + 2 n) (2 + b + 2 n) (b + c + k + 4 n) (-b - k + x)}\notag\\
        &\cdot \big(s^2 (a + 2 n) (2 + b + 2 n) (1 + b + c + 4 n) (2 + b + c + 4 n) + 
 s (2 + b + 2 n)\notag\\
 &\cdot(1 + b + c + 4 n) (2 + b + c + 4 n) (2 a b - c - 
    b c + 4 a n + 4 b n + 12 n^2 - a x\notag\\
    &+ c x + y + a y + b y + 
    4 n y) + (2 + b + 2 n) (1 + b + c + 4 n) (2 + b + c + 
    4 n)\notag\\& (a b^2 - b c - b^2 c + 4 a b n + 2 b^2 n + 4 n^2 + 4 a n^2 + 
    16 b n^2 + 4 c n^2 + 24 n^3 - a b x \notag\\
    &+ c x + 2 b c x + 2 n x - 
    2 a n x + 2 b n x + 2 c n x - c x^2 - 2 n x^2 + b y + a b y + 
    b^2 y \notag\\
    &+ 2 n y + 2 a n y + 6 b n y + 2 c n y + 12 n^2 y + c x y + 
    2 n x y)\big).\notag
        \end{align}
        \item For the parameter $c$ we obtain
        \begin{align}
            &(1 + b + c + 4 n) (2 + b + c + 4 n) (-1 + a + c + 2 n - y)\cdot F_{a,b,c,x,y}(a+2n;n)\\
            &-(2 + b + c + 4 n)  (a + c + 2 a c + b c + 2 c^2 + 2 n + 4 a n + 
              12 c n + 12 n^2 + a x + c x + \hphantom{a}\notag\\
            &4 n x - y - a y - b y - 2 c y -  8  n  y)\cdot F_{a,b,c+1,x,y}(a+2n;n)+\hphantom{a}\notag\\
            &(1 + c + 2 n) (1 + a + b + c + 6 n) (1 + c + 2 n + x - y)\cdot F_{a,b,c+2,x,y}(a+2n;n)=0\notag
        \end{align}
        with certificate
        \[R_{F,c}(a,b,c,x,y,n,s)=\frac{(-1 + s) (1 + b + c + 4 n) (2 + b + c + 4 n) (1 - b - s + x)}{b + c + s + 4 n}.\]
        \item For the parameter $x$ we compute
        \begin{align}
            &(-1 + a + b + 2 n - x) (1 + 2 n + x)\cdot F_{a,b,c,x,y}(a+2n;n)+\hphantom{a}\\
            &(2 - a - 2 b + c - b c - 2 n - 4 b n - 4 n^2 + 4 x - a x - 2 b x +  c x + 2 x^2 - 3 y + \hphantom{a}\notag\\
            &a y + b y - 2 x y)\cdot F_{a,b,c,x+1,y}(a+2n;n)+\hphantom{a}\notag\\
            &(1 + c + 2 n + x - y) (-1 + b + 2 n - x + y)\cdot F_{a,b,c,x+2,y}(a+2n;n)=0\notag
        \end{align}
        with certificate
        \[R_{F,x}(a,b,c,x,y,n,s)=\frac{(-1 + s) (-1 + b + c + s + 4 n) (1 - b - s + x) (2 n + y)}{(-2 + b +s + 2 n - x + y) (-1 + b + s + 2 n - x + y)}.\]
        \item And finally for the parameter $y$ we find the recursion
        \begin{align}
            &(-1 + c + 2 n + x - y) (1 + b + 2 n - x + y)\cdot F_{a,b,c,x,y}(a+2n;n)+\hphantom{a}\\
            &(6 - a + 2 b - 3 c - b c + 2 n - 4 c n - 4 n^2 - 4 x + a x + c x + 7 y - a y + \hphantom{a}\notag\\  
            &b y - 2 c y - 2 x y + 2 y^2)\cdot F_{a,b,c,x,y+1}(a+2n;n)+\hphantom{a}\notag\\
            &(-3 + a + c + 2 n - y) (2 + 2 n + y)\cdot F_{a,b,c,x,y+2}(a+2n;n)=0\notag
        \end{align}
        with certificate
        \[R_{F,y}(a,b,c,x,y,n,s)=\frac{(-1 + s) (-1 + b + c + s + 4 n) (1 - b - s + x)}{1 + 2 n + y}.\]
    \end{itemize}
    Now, applying Lemma~\ref{recpreserv} to $I_{a,b,c,x,y}(a+2n;n)$ and Recursion~\eqref{arec} for the parameter $a$, we have that $I_{a,b,c,x,y}(a+2n;n)$ admits a shape as in the proposition statement for all $a\geq 0$ if and only if $I_{0,b,c,x,y}(0+2n;n)$ and $I_{1,b,c,x,y}(1+2n;n)$ do so. 
    However, we know $I_{0,b,c,x,y}(0+2n;n)$ is of such a shape for arbitrary values of $b\geq 0$ if and only if it is true for $I_{0,0,c,x,y}(0+2n;n)$ and $I_{0,1,c,x,y}(0+2n;n)$ just by applying again Lemma~\ref{recpreserv} to $I_{0,b,c,x,y}(0+2n;n)$ and Recursion~\eqref{brec} for parameter $b$.
    Continuing this idea we arrive at the observation that $I_{a,b,c,x,y}(a+2n;n)$ is of the shape
    \[(-1)^n\frac{(3n+1)!}{(n!)^3}\cdot \text{ rational function}\]
    if it is true for all $I_{z_1,z_2,z_3,z_4,z_5}(z_1+2n;n)$ where $z=(z_1,z_2,z_3,z_4,z_5)$ varies over all possible choices of binary tuples $z\in\{0,1\}^5$.

    Naturally, the same holds for $F_{a,b,c,x,y}(a+2n;n)$. That is, the second statement\break of the proposition is true once it is shown for all $F_{z_1,z_2,z_3,z_4,z_5}(z_1+2n;n)$ where\break $z=(z_1,z_2,z_3,z_4,z_5)$ varies over all possible choices of binary tuples $z\in\{0,1\}^5$.

    Hence, the proof of the proposition breaks down to checking $2\cdot 2^5=64$ identities. Lucky for us, this can again be achieved automatically by a computer via the Zeilberger Algorithm and a good guessing method. Below, we present a code that does this for us. It uses again the function \texttt{Zb[...]} from \texttt{fastZeil.m} but also \texttt{Rate[...]} from Christian Krattenthaler's guessing-algorithm package \texttt{Rate.m} \cite{AdvancedDet}. It can be obtained from Krattenthaler's website. This function takes a sequence of numbers and applies rational interpolation methods to guess a rational expression that interpolates this sequence. 

    The code works as follows.
    \begin{itemize}
        \item[(1)] We define functions $\texttt{Ifunk}:=I_{a,b,c,x,y}(a+2n;n)$ and \texttt{ISummand} where the second one captures just the single symbolic expression for the summands in the definition of $I_{a,b,c,x,y}(a+2n;n)$. This is necessary, since the Zeilberger algorithm \texttt{Zb[...]} only takes those summand expression as its argument.
        \item[(2)] A five-times \texttt{For}-loop generates all possible choices for $z\in\{0,1\}^5$.
        \item[(3)] The variable \texttt{Ratio1} is now given as the rational function that interpolates the first 10 values of $I_{a,b,c,x,y}(a+2n;n)\cdot (-1)^n\frac{(n!)^3}{(3n+1)!}$ produced by \texttt{Rate[...]}. Sometimes, the guessed function does not interpolate some of the first initial values which does not matter since we prove our formulas only for $n$ large enough.
        \item[(4)] The variable \texttt{Ratio} then captures the expression stated in the proposition as a function of $n$. It remains to show $\texttt{Ifunk}(n)=\texttt{Ratio}(n)$.
        \item[(5)] To do so, we compare the first couple values of both functions. Then apply Zeilbergers algorithm to $\texttt{Ifunk}(n)$ to obtain a linear recurrence with respect to $n$. In a final variable called \texttt{Rekursion} we define the simplified recurrence expression after replacing all instances of $\texttt{Ifunk}(n)$ with $\texttt{Ratio}(n)$ and print the result. If the output is \texttt{0} this means that $\texttt{Ratio}(n)$ fulfils the same recursion. Since the both function coincide on enough initial values, we have equality for all $n$.
    \end{itemize}

\begin{lstlisting}
Ifunk[a_, b_, c_, x_, y_, n_] := Sum[(-1)^k*Binomial[c + x - y - 1 + 2 n + k, x + 2 n - 1]*Pochhammer[c + 2 n + 1, k - 1]*Pochhammer[b + c + 4 n + k, a +2n-k]/Factorial[a + 2 n - k]/Factorial[k - 1], {k, 1, a + 2 n}];
ISummand[a_, b_, c_, x_, y_, n_, k_] := (-1)^k*Binomial[c + x - y - 1 + 2 n + k, x + 2 n - 1]*Pochhammer[c + 2 n + 1, k - 1]*Pochhammer[b + c + 4 n + k, a + 2 n - k]/Factorial[a + 2 n - k]/Factorial[k - 1];
(*Now we check the 32 cases*)
For[ai = 0, ai <= 1, ai++,
 For[bi = 0, bi <= 1, bi++,
  For[ci = 0, ci <= 1, ci++,
   For[xi = 0, xi <= 1, xi++,
    For[yi = 0, yi <= 1, yi++,
     Ratio1 := Rate @@ Table[Ifunk[ai, bi, ci, xi, yi, n]*Factorial[n]^3/Factorial[3 n + 1] (-1)^n, {n, 1, 10}] /.PProduct -> Product;
     Ratio := (Ratio1[[1]] /. i0 -> n)*Factorial[3 n + 1]/Factorial[n]^3*(-1)^n;
     Print["(a,b,c,x,y)=", {ai, bi, ci, xi, yi}];
     Print["Expression with rational function:"];
     Print["q(n)=", Ratio];
     Print["Test equality:"];
     Print["q(1),...,q(10)=", Table[Ratio /.n -> i,{i,1,10}]];
     Print["I(2*1),...,I(2*10)=",Table[Sum[ ISummand[ai, bi, ci, xi, yi, i, k], {k, 1, ai + 2 i}], {i, 1, 10}]];
     Print["Zeilberger recurrence for I(2n):"];
     Print[Zb[ISummand[ai, bi, ci, xi, yi, n, k], {k, 1, ai + 2 n }, n,2]]
     Print["Certificate:"];
     Print[show[R]];
     Print["We obtain a recurrence for the rational function:"];
     Rekursion = Simplify[(Zb[ ISummand[ai, bi, ci, xi, yi, n, k], {k, 1, ai + 2 n}, n,2] /. {SUM[n] -> Ratio, SUM[n + 1] -> (Ratio /. n -> n + 1),SUM[n + 2] -> (Ratio /. n -> n + 2)})];
     Print[Rekursion];
     Print["Finally we test whether the expression on the left-hand side simplifies to 0:"];
     Print[Style[ReleaseHold[Rekursion[[1]][[1]]] // FunctionExpand //Distribute // Expand // Together, Green, Bold]];
     (*Here Rekursion[[1]][[1]] gives exactly the left-hand side of the recurrence equation for the rational function and simplifies it afterwards. If the output is =0, then the rational function really fulfils the same recurrence as I(2n).*)
     ]]]]]
\end{lstlisting}

Naturally we present the same for $F_{a,b,c,x,y}(a+2n;n)$.

\begin{lstlisting}
FiSum[a_, b_, c_, x_, y_, n_] := Sum[(-1)^k*Binomial[a + 2 n - 1, k - 1]*Binomial[b + 2 n + k - x + y - 1, b + k - x - 1]*Pochhammer[b + 2 n + 1, k - 1]/Pochhammer[b + c + 4 n + 1, k - 1], {k, 1, a + 2 n}];
FiSummand[a_, b_, c_, x_, y_, n_, k_] := (-1)^k*Binomial[a + 2 n - 1, k - 1]*Binomial[b + 2 n + k - x + y - 1, b + k - x - 1]*Pochhammer[b + 2 n + 1, k - 1]/Pochhammer[b + c + 4 n + 1, k - 1];
(*We evaluate the 32 cases*)
For[ai = 0, ai <= 1, ai++,
 For[bi = 0, bi <= 1, bi++,
  For[ci = 0, ci <= 1, ci++,
   For[xi = 0, xi <= 1, xi++,
    For[yi = 0, yi <= 1, yi++,
     Ratio1 := Rate @@ Table[FiSum[ai, bi, ci, xi, yi, n]*Factorial[n]^3*Factorial[6 n + 2]/Factorial[3 n + 1]/Factorial[2 n]/ Factorial[4 n] (-1)^n, {n, 1, 10}] /. PProduct -> Product;
     Ratio := (Ratio1[[1]] /. i0 -> n)/Factorial[n]^3/Factorial[6 n + 2]*Factorial[3 n + 1]*Factorial[2 n]*Factorial[4 n](-1)^n;
     Print["(a,b,c,x,y)=", {ai, bi, ci, xi, yi}];
     Print["Expression with rational function:"];
     Print["q(n)=", Ratio];
     Print["Test equality."];
     Print["q(1),...,q(10)=", Table[Ratio /. n -> i, {i, 1, 10}]];
     Print["F(2*1),...,F(2*10)=",Table[Sum[FiSummand[ai, bi, ci, xi, yi, i, k], {k,1,ai + 2 i }], {i, 1, 10}]];
     Print["Zeilberger recurrence for F(2n):"];
     Print[Zb[FiSummand[ai, bi, ci, xi, yi, n, k], {k, 1, ai + 2 n}, n,2]];
     Print["Certificate:"];
     Print[show[R]];
     Print["We obtain recurrence for the rational expression:"];
     Rekursion = Simplify[(Zb[FiSummand[ai, bi, ci, xi, yi, n, k], {k, 1, ai + 2 n }, n, 2] /. {SUM[n] -> Ratio, SUM[n + 1] -> (Ratio /. n -> n + 1), SUM[n + 2] -> (Ratio /. n -> n + 2)})];
     Print[Rekursion];
     Print["Finally we test whether the expression on the left-hand side reduces to 0:"];
     Print[Style[ReleaseHold[Rekursion[[1]][[1]]] // FunctionExpand //Distribute // Expand // Together, Green, Bold]];
\end{lstlisting}
This concludes the proof of the proposition. 
\end{proof}

Using the proposition, we are finally able to prove the main result of this subsection.
\begin{thm}[\sc{Reduction of the parameter $a$}]\label{pararedA}
    We have that $\mathbb{P}(a,b,c,x,y;n)$ with non-negative integer arguments $a,b,c,x,y\geq 0$ satisfies Theorem~\ref{1/3} if $\mathbb{P}(0,b,c,x,y;n)$ fulfils it. 
\end{thm}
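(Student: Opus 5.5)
We argue by induction on $a$, telescoping with Lemma~\ref{redA}. For fixed $b,c,x,y\geq 0$ and $a\geq 1$, summing Lemma~\ref{redA} over $a'=1,\dots,a$ gives, for $n$ large enough,
\[
\mathbb{P}(a,b,c,x,y;n)=\mathbb{P}(0,b,c,x,y;n)+\sum_{a'=1}^{a}\frac{(c+2n)!}{(b+2n+1)_{c+2n}}\,I_{a',b,c,x,y}(a'+2n;n)\,F_{a',b,c,x,y}(a'+2n;n).
\]
Since $\mathbb{P}(0,b,c,x,y;n)=\frac13+f_{0,b,c,x,y}(n)\binom{2n}{n}^3/\binom{6n+2}{3n+1}$ by hypothesis, and a finite sum of rational multiples of the same function $\binom{2n}{n}^3/\binom{6n+2}{3n+1}$ is again of that form, it is enough to show that each summand equals $g_{a',b,c,x,y}(n)\binom{2n}{n}^3/\binom{6n+2}{3n+1}$ with $g$ rational in $n$. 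The threshold $N_{a,b,c,x,y}$ is taken as the maximum of the finitely many thresholds involved.

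For a single summand we use Proposition~\ref{IFshape}. Its two parts give
\[
I\cdot F=(-1)^{2n}\frac{(3n+1)!^2(2n)!(4n)!}{(n!)^6(6n+2)!}\,q(n)r(n)=\frac{(3n+1)!^2(2n)!(4n)!}{(n!)^6(6n+2)!}\,q(n)r(n),
\]
so the sign disappears. The prefactor can be rewritten as
\[
\frac{(c+2n)!}{(b+2n+1)_{c+2n}}=\frac{(c+2n)!\,(b+2n)!}{(b+c+4n)!}.
\]
Because $b,c$ are fixed, this equals $\frac{(2n)!^2}{(4n)!}$ times a rational function of $n$: the quotients $(c+2n)!/(2n)!$ and $(b+2n)!/(2n)!$ are polynomials in $n$, and $(b+c+4n)!/(4n)!$ is a polynomial in $n$.

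Multiplying these together, the summand equals
\[
\text{(rational in $n$)}\cdot\frac{(2n)!^3(3n+1)!^2}{(n!)^6(6n+2)!}.
\]
This is exactly $\text{(rational)}\cdot\binom{2n}{n}^3/\binom{6n+2}{3n+1}$, since $\binom{2n}{n}^3=(2n)!^3/(n!)^6$ and $1/\binom{6n+2}{3n+1}=(3n+1)!^2/(6n+2)!$. This gives the required $g_{a',b,c,x,y}(n)$.

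The only delicate point is bookkeeping, not mathematics. Proposition~\ref{IFshape} and Lemma~\ref{redA} each hold only for $n$ large enough, and these thresholds depend on $a'$. We therefore take the maximum over $a'\le a$ together with the threshold for $\mathbb{P}(0,b,c,x,y;n)$. We also need the rewriting of Pochhammer quotients as rational functions to be valid for every fixed non-negative $b,c$, which is immediate for $n\geq 1$.
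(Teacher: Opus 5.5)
Your proof is correct and follows the paper's argument. Lemma~\ref{redA} writes each increment in $a$ as the prefactor times $I\cdot F$, Proposition~\ref{IFshape} supplies the shapes of $I$ and $F$, the prefactor is $\frac{(2n)!^2}{(4n)!}$ times a rational function, and induction on $a$ (your explicit telescoping) finishes; your version is slightly more careful than the paper's, since you write the prefactor correctly as $(c+2n)!\,(b+2n)!/(b+c+4n)!$ (the paper has $(b+1+c+4n)!$ in the denominator, harmless for the argument) and make the sign cancellation and the choice of threshold explicit.
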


\begin{proof}
    By Lemma~\ref{redA} we have
     \begin{align}
        \mathbb{P}(a,b,c,x,y;n)&-\mathbb{P}(a-1,b,c,x,y;n)= \label{diffeq}\\
        &\frac{(c+2n)!}{(b+2n+1)_{c+2n}}I_{a,b,c,x,y}(a+2n;n)\cdot F_{a,b,c,x,y}(a+2n;n).\notag
    \end{align}
    Now, having a closer look at the factors on the right-hand side we see
    \begin{align}
        \frac{(c+2n)!}{(b+1+2n)_{c+2n}}=\frac{(c+2n)!}{\frac{(b+1+c+4n)!}{(b+2n)!}}=\frac{(2n)!}{\frac{(4n)!}{(2n)!}}q_1(n)=\frac{(2n)!(2n)!}{(4n)!}q_1(n)\notag
    \end{align}
    for some rational function $q_1(p)$ since for a positive integer constants $C$ and $k$ we always have $(C+k\cdot n)!=(k\cdot n)!\cdot \text{polynomial}(n)$. Moreover, Proposition~\ref{IFshape} yields that there exist rational functions $q_2(n)$ and $q_3(n)$ such that
    \begin{align}
        &\frac{(c+2n)!}{(b+2n+1)_{c+2n}}I_{a,b,c,x,y}(a+2n;n)\cdot F_{a,b,c,x,y}(a+2n;n)\notag\\
        &=\frac{(2n)!(2n)!}{(4n)!}q_1(n)\cdot (-1)^n\frac{(3n+1)!}{(n!)^3}\cdot q_2(n)\cdot (-1)^n\frac{(2n)!(3n+1)!(4n)!}{(n!)^3(6n+2)!}\cdot q_3(n)\notag\\
        &=\frac{(2n)!(2n)!(2n)!}{(n!)^6}\cdot \frac{(3n+1)!(3n+1)!}{(6n+2)!}\big(q_1(n)\cdot q_2(n)\cdot q_3(n)\big)\notag\\
        &=q(n)\cdot \binom{2n}{n}^3\bigg/\binom{6n+2}{3n+1}\notag
    \end{align}
    where $q(n)=q_1(n)\cdot q_2(n)\cdot q_3(n)$ is again a rational function in $n.$ Hence, if we insert this into Equation~\eqref{diffeq} we instantly obtain
    \[\mathbb{P}(a,b,c,x,y;n)=\mathbb{P}(a-1,b,c,x,y;n)+q(n)\cdot \binom{2n}{n}^3\bigg/\binom{6n+2}{3n+1}\]
    which means $\mathbb{P}(a,b,c,x,y;n)$ fulfils Theorem~\ref{1/3} if $\mathbb{P}(0,b,c,x,y;n)$ does by induction on the parameter $a$.
\end{proof}

\subsection{Reduction of the parameter $b$}
As we will see, the hardest part is already over. Once we know how to reduce the first variable, the others follow by straightforward reflection and rotation arguments.
\begin{thm}[\sc{Reduction of the parameter $b$}]\label{pararedB}
    If $\mathbb{P}(a,0,c,x',y';n)$ fulfils Theorem~\ref{1/3} for all positions $x',y'\in \Z$, then also $\mathbb{P}(a,b,c,x,y;n)$ satisfies the theorem for all positions $x,y\in\Z$.
\end{thm}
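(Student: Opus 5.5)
The plan is to reduce Theorem~\ref{pararedB} to Theorem~\ref{pararedA} by a symmetry of the hexagon that exchanges the roles of the side lengths $a$ and $b$ and maps horizontal lozenges to horizontal lozenges. Then the parameter $b$ sits in the first slot, Theorem~\ref{pararedA} sends it to $0$, and we map back. The natural candidate is the reflection of $H_{a,b,c}$ in the axis that keeps horizontal lozenges horizontal while interchanging the two families of non-horizontal sides. This is consistent with Fischer's ranges $0\le x\le a+b-1$ and $0\le y\le a+c-1$ in Theorem~\ref{fischer}. Since the reflection is a bijection between tilings of $H_{a,b,c}$ and tilings of the reflected region, and preserves the uniform measure, placement probabilities are preserved. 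Which two of $a,b,c$ it interchanges depends on the labelling in Figure~\ref{hexa1}. If a single reflection does not exchange $a$ and $b$, I would instead compose it with a $120^\circ$ rotation; this permutes the lozenge types, but one can choose a composition that again fixes the horizontal type.

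First I will write the symmetry explicitly in the coordinates of Figure~\ref{coord}. I expect an affine map $(x,y)\mapsto(x',y')$ whose coefficients are integers and whose constant term is an integer combination of the side lengths, of the shape $x'=\alpha(a,b,c)\pm x\pm y$ and similarly for $y'$. The translation of the south-west corner and the shift by one unit caused by using the south-most vertex of the lozenge are absorbed into these constants. The important bookkeeping check is that the dependence on $n$ cancels. All three side lengths grow by the same amount $2n$, and both coordinates are shifted by $2n$, so a lozenge at $(x+2n,y+2n)$ in $H_{a+2n,b+2n,c+2n}$ should be sent to $(x'+2n,y'+2n)$ in $H_{b+2n,a+2n,c+2n}$ with $x',y'$ independent of $n$. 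This yields
\[\mathbb{P}(a,b,c,x,y;n)=\mathbb{P}(b,a,c,x',y';n).\]

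Next I apply Theorem~\ref{pararedA} to the right-hand side, with $b$ now in the first slot. The reduction to non-negative parameters at the start of this section (shifting by $2n_0$ and replacing $n$ by $n-n_0$) lets us assume all arguments are non-negative, even though $x',y'$ may be negative. Since Theorem~\ref{1/3} only concerns large $n$, such a shift does not change the statement. Then $\mathbb{P}(b,a,c,x',y';n)$ satisfies Theorem~\ref{1/3} as soon as $\mathbb{P}(0,a,c,x',y';n)$ does. Applying the same symmetry once more gives
\[\mathbb{P}(0,a,c,x',y';n)=\mathbb{P}(a,0,c,x'',y'';n)\]
for some integer position $(x'',y'')$. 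This holds by hypothesis, because the assumption of Theorem~\ref{pararedB} covers every position $x'',y''\in\Z$. This is exactly why the statement quantifies over all positions rather than a fixed one.

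I expect the only real obstacle to be the coordinate bookkeeping: correctly identifying which sides of $H_{a,b,c}$ the reflection swaps, and checking that the image of the south-most vertex of a horizontal lozenge transforms by an $n$-independent affine map compatible with the $2n$-shifts. Everything else is a formal consequence of Theorem~\ref{pararedA}. One more point needs care: positions outside the hexagon, or with $x,y$ outside Fischer's range, must be handled consistently under the symmetry. There the probability is identically $0$ (for large $n$ no such case arises after the shift), so I would note that the symmetry also maps the admissible positions onto admissible positions.
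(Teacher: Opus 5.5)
Your main line is the paper's proof. You reflect in the vertical axis to get $\mathbb{P}(a,b,c,x,y;n)=\mathbb{P}(b,a,c,x',y';n)$ with $n$-independent $x',y'$; in the paper's coordinates these are $x'=a+b-x$ and $y'=b+y-x$. You then apply Theorem~\ref{pararedA} and reflect back, using that the hypothesis covers every position.

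Two side remarks need correcting.

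First, the $120^\circ$ fallback is unnecessary, and it could not work anyway. The only hexagon symmetries that fix the horizontal type are the half-turn and the two axis reflections. Each of these either fixes $(a,b,c)$ or swaps one and the same pair, which here is $a,b$.

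Second, and more importantly, your patch for negative $x',y'$ fails. Shifting by $2n_0$ also shifts $a,b,c$. So Theorem~\ref{pararedA} reduces $\mathbb{P}(b+2n_0,a+2n_0,c+2n_0,x'+2n_0,y'+2n_0;m)$ to $\mathbb{P}(0,a+2n_0,c+2n_0,x'+2n_0,y'+2n_0;m)=\mathbb{P}(-2n_0,a,c,x',y';m+n_0)$. You never reach $\mathbb{P}(0,a,c,\cdot,\cdot)$, which is the quantity that reflects to the hypothesis at fixed $a,c$.

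The right fix is one the paper's proof also tacitly needs, since $x'=a+b-x$ can be negative. Lemma~\ref{redA} and Proposition~\ref{IFshape}, and hence Theorem~\ref{pararedA}, remain valid for arbitrary fixed integers $x,y$ once $n$ is large:
\begin{itemize}
\item the position $(x+2n,y+2n)$ then lies inside the hexagon, so Fischer's formula applies;
\item the $x$- and $y$-recurrences for $I$ and $F$ can be run backwards, because their extreme coefficients are nonzero polynomials in $n$.
\end{itemize}
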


\begin{proof}
    As it is shown in Figure~\ref{breflect} the tilings of the semiregular hexagon $H_{a,b,c}$ with a fixed lozenge at position $(x,y)$ are in bijection with the tilings of the semiregular hexagon $H_{b,a,c}$ with a fixed lozenge at some other location $(x',y')$ simply by reflecting along a vertical line.

    Therefore, we have
    \[\mathbb{P}(a,b,c,x,y;n)=\mathbb{P}(b,a,c,x',y';n)\]
    for some $x',y'\in\Z$. However, by Theorem~\ref{pararedA} we know $\mathbb{P}(b,a,c,x',y';n)$ fulfils Corollary~\ref{1/3} if $\mathbb{P}(0,a,c,x',y';n)$ does. Moreover, since again
    \[\mathbb{P}(0,a,c,x',y')=\mathbb{P}(a,0,c,x'',y'';n)\]
    for some location $(x'',y'')\in\Z^2$ the assertion follows.
    \begin{figure}
\centering
\includegraphics[width=0.8\textwidth]{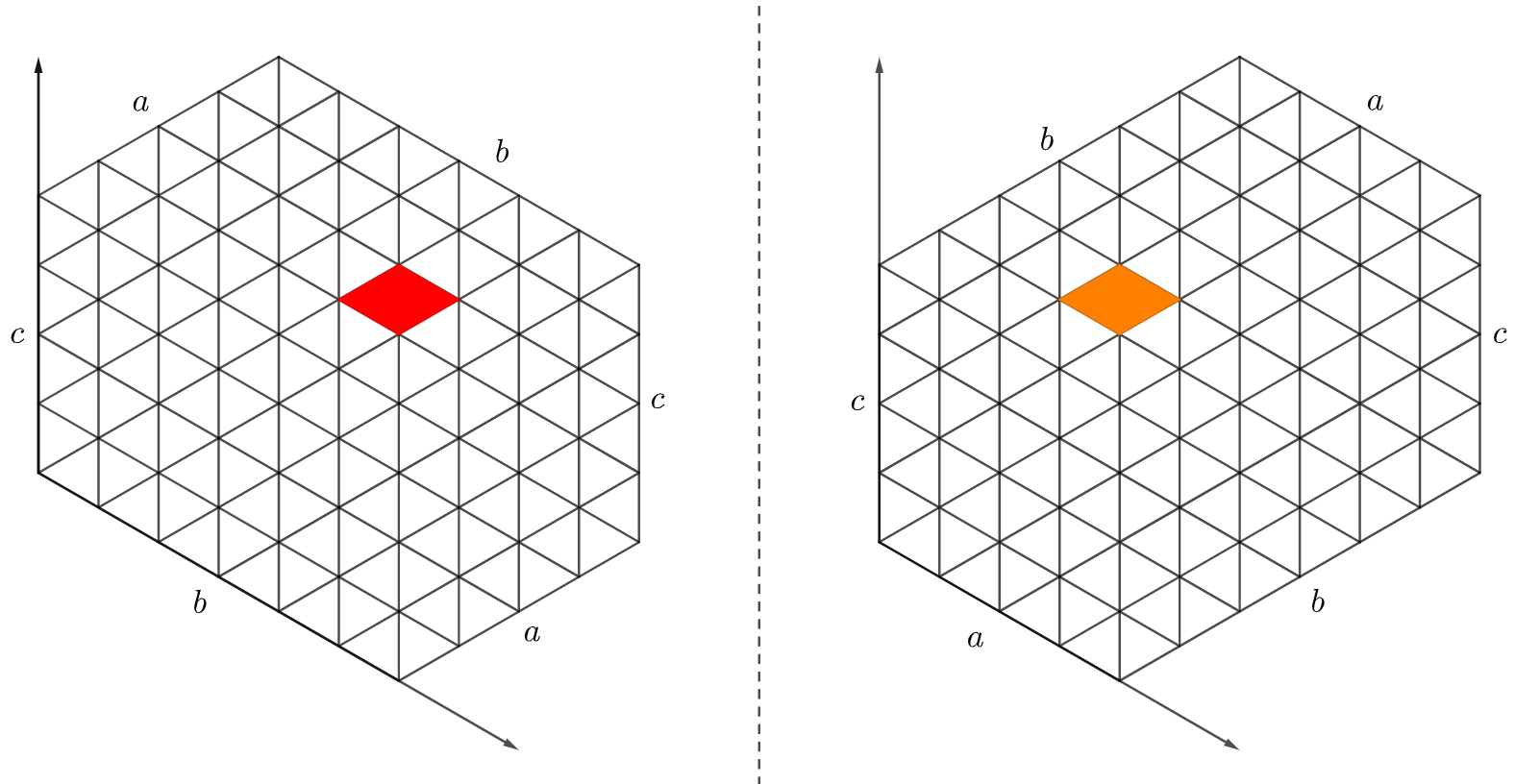}
\caption{Tilings of $H_{a,b,c}$ with a fixed horizontal lozenge are in bijection to tilings of $H_{b,a,c}$ with a fixed horizontal lozenge.}
\label{breflect}
\end{figure}
\end{proof}

Thus, we are able to already reduce two out of the five parameters.

\begin{cor}
    If $\mathbb{P}(0,0,c,x',y')$ satisfies Theorem~\ref{1/3} for all positions $(x',y')\in\Z$ then so does $\mathbb{P}(a,b,c,x,y;n)$ for all $a,b,x,y\in\Z$.
\end{cor}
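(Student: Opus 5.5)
The corollary follows by chaining Theorem~\ref{pararedA} and Theorem~\ref{pararedB}, after first reducing to non-negative parameters. We use the shifting argument from the beginning of this section. For arbitrary $a,b,c,x,y\in\Z$, pick $n_0$ with $2n_0\geq|\min\{a,b,c,x,y\}|$. Then $\mathbb{P}(a,b,c,x,y;n)=\mathbb{P}(a',b',c',x',y';n-n_0)$ with all primed parameters non-negative. Reindexing $n\mapsto n-n_0$ preserves the shape $\tfrac13+f(n)\binom{2n}{n}^3/\binom{6n+2}{3n+1}$, since $\binom{2n-2n_0}{n-n_0}^3/\binom{6n-6n_0+2}{3n-3n_0+1}$ is a rational function of $n$ times $\binom{2n}{n}^3/\binom{6n+2}{3n+1}$. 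Note that the shift also changes $c$ to $c+2n_0$. The hypothesis, however, concerns $\mathbb{P}(0,0,c,x',y';n)$ for every $c$ and all positions, so this causes no problem. The uniformity of the hypothesis in $c$ is the point to keep track of.

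Now let $a,b,c,x,y\geq 0$. By Theorem~\ref{pararedB}, $\mathbb{P}(a,b,c,x,y;n)$ satisfies Theorem~\ref{1/3} for all positions as soon as $\mathbb{P}(a,0,c,x',y';n)$ does for all positions $x',y'$. The proof of Theorem~\ref{pararedB} passes through the reflection and through Theorem~\ref{pararedA} applied to the first parameter of $\mathbb{P}(b,a,c,\cdot,\cdot;n)$.

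It remains to treat $\mathbb{P}(a,0,c,x',y';n)$. Here we apply Theorem~\ref{pararedA} with $b=0$. It reduces this case to $\mathbb{P}(0,0,c,x',y';n)$, which holds by hypothesis. This needs $x',y'\geq 0$. If the reflected positions leave that range, we apply the $n_0$-shift once more; this keeps $a=0$ and $b=0$ fixed only up to adding $2n_0$. To avoid that, I would instead carry out the induction on $a$ in Lemma~\ref{redA} for $n$ large enough, where Fischer's range conditions hold automatically. The positions $(x'+2n,y'+2n)$ lie inside $H_{a+2n,2n,c+2n}$ for large $n$, so only the growth of $n$ matters.

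The main obstacle is this bookkeeping. One must make sure that the shifts used to reach non-negative parameters, and the reflections producing new positions $(x',y')$, never destroy the pattern ``first two side parameters equal to $0$''. I expect to resolve it by noting that the reflection of Figure~\ref{breflect} maps positions to positions affinely with integer offsets. Together with the quantifier over all $(x',y')\in\Z^2$ and all $c$ in the hypothesis, this absorbs every such shift.
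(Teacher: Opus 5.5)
Your core chain is exactly the paper's proof, which is the single sentence ``combine Theorem~\ref{pararedA} with Theorem~\ref{pararedB}'': first Theorem~\ref{pararedB} removes $b$ (reflection plus Theorem~\ref{pararedA}), then Theorem~\ref{pararedA} with $b=0$. The bookkeeping you add on top, however, rests on a misreading of the hypothesis.

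In the corollary only $(x',y')$ is quantified. The parameter $c$ is one fixed value, the same in hypothesis and conclusion, so the hypothesis is \emph{not} uniform in $c$. Your $n_0$-shift replaces $(a,b,c,x,y;n)$ by $(a+2n_0,b+2n_0,c+2n_0,x+2n_0,y+2n_0;n-n_0)$. After eliminating the first two parameters you land on $\mathbb{P}(0,0,c+2n_0,\cdot,\cdot;n-n_0)=\mathbb{P}(-2n_0,-2n_0,c,\cdot,\cdot;n)$, about which the hypothesis says nothing. So your first paragraph, and the ``absorb every shift'' resolution in your last paragraph, do not go through as written.

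The obstacle you identified is nevertheless real, and the paper passes over it silently. The reflected positions $(x',y')$ can be negative even when $x,y\ge 0$. Meanwhile Theorem~\ref{pararedA} rests on Lemma~\ref{redA} and Proposition~\ref{IFshape}, both stated only for non-negative arguments.

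The fix is to shift the \emph{unconditional} ingredients rather than the hypothesis. The prefactor $\frac{(c+2n)!}{(b+2n+1)_{c+2n}}$ and the quantities $I_{a,b,c,x,y}(a+2n;n)$ and $F_{a,b,c,x,y}(a+2n;n)$ depend only on $a+2n,\,b+2n,\,c+2n,\,x+2n,\,y+2n$. Hence they are invariant under $(a,b,c,x,y,n)\mapsto(a+2,b+2,c+2,x+2,y+2,n-1)$. Your remark that reindexing $n$ only costs a rational factor then extends Lemma~\ref{redA} and Proposition~\ref{IFshape}, and therefore Theorem~\ref{pararedA}, to arbitrary integers $b,c,x,y$ for $n$ large, with $c$ left untouched. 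The same shift also gives Lemma~\ref{redA} for $a\le 0$, so for literally negative $a,b$ you can run the induction downward. This is exactly what the chain needs.

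Your fallback of running Lemma~\ref{redA} for large $n$ covers only half of this. Theorem~\ref{pararedA} also invokes Proposition~\ref{IFshape} at the new positions, and you did not extend it.

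Finally, for the reflection step it is not enough that positions transform affinely. What you need is that the reflected $(x',y')$ does not depend on $n$, i.e.\ that the reflection commutes with the diagonal shift above. This does hold, and it is what makes $\mathbb{P}(a,b,c,x,y;n)=\mathbb{P}(b,a,c,x',y';n)$ valid for all large $n$ with one fixed $(x',y')$.
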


\begin{proof}
    Just combine Theorem~\ref{pararedA} with Theorem~\ref{pararedB}.
\end{proof}

\subsection{Reduction of the parameter $c$}
Finally, we will see how we can reduce the problem to the regular hexagon by sending all parameters to $0$ simultaneously.
\begin{thm}[\sc{Reduction of the parameter $c$}]\label{pararedC}
    If $\mathbb{P}(0,0,0,x',y';n)$ satisfies Theorem~\ref{1/3} for all locations $(x',y')\in \Z$ then so does $\mathbb{P}(a,b,c,x,y;n)$ with $a,b,c,x,y\in\Z$.
\end{thm}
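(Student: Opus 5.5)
The plan is to reduce $c$ by the same symmetry trick used for $b$: move $c$ into the role of the first parameter via a symmetry of the hexagon, apply Theorem~\ref{pararedA}, then move things back. The 120-degree rotation of the triangular lattice maps $H_{a,b,c}$ onto $H_{c,a,b}$ (up to relabelling of sides). It permutes the three lozenge types, however, so a horizontal lozenge is sent to a left- or right-leaning one. To stay within statements about horizontal lozenges, I combine the rotation with the reflection of Theorem~\ref{pararedB}. Concretely, a reflection across a line through the hexagon's center parallel to one of the non-horizontal side directions swaps the roles of $a$ and $c$ (resp. $b$ and $c$). Like the vertical reflection in Figure~\ref{breflect}, it fixes the set of horizontal lozenges as a type only if the reflection axis is vertical. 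So I would use the following cleaner route.

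\textbf{Step 1 (complementary types).} Every tiling of $H_{a,b,c}$ uses fixed numbers of each lozenge type. More usefully, locally at a fixed upward unit triangle exactly one of the three lozenges containing it is present. Hence the probabilities of the three types at that triangle sum to $1$:
\[
\mathbb{P}_{\lozHlow}+\mathbb{P}_{\lozL}+\mathbb{P}_{\lozR}=1 .
\]
A rotation by $120^\circ$ turns the probability of a $\lozL$ or $\lozR$ at a given triangle in $H_{a+2n,b+2n,c+2n}$ into the probability of a horizontal lozenge at some position $(x'+2n,y'+2n)$ in $H_{c+2n,a+2n,b+2n}$ (resp. $H_{b+2n,c+2n,a+2n}$). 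Here $x',y'$ are affine functions of $x,y,a,b,c$ only, not of $n$, because the center-relative coordinates scale consistently with the $2n$ shifts. Checking this independence of $n$ is the main bookkeeping point.

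\textbf{Step 2 (reduction).} Writing the horizontal probability as $1$ minus the two rotated horizontal probabilities, each rotated term has its own $c$-parameter in the first slot. By Theorem~\ref{pararedA} and Theorem~\ref{pararedB} (i.e.\ the preceding corollary), each rotated term has the required shape provided the corresponding $\mathbb{P}(0,0,\cdot,\cdot,\cdot;n)$ does. Iterating, a term with parameters $(c,a,b)$ reduces to $(0,0,b)$, and rotating back reduces to the horizontal case in $H_{2n,2n,b+2n}$ at a horizontal triangle. Alternatively, I can apply the same decomposition once more, or simply observe that since $a,b$ are already $0$ I am in $H_{2n,2n,c+2n}$. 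There the three rotated versions are $(0,0,c)$, $(c,0,0)$ and $(0,c,0)$, and the last two reduce via Theorem~\ref{pararedA}/\ref{pararedB} to $(0,0,0)$.

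\textbf{Step 3 (shape preservation).} The class of functions $\tfrac13+f(n)\binom{2n}{n}^3/\binom{6n+2}{3n+1}$ with rational $f$ is an affine space closed under $1-X-Y$, since $1-\tfrac13-\tfrac13=\tfrac13$. So the identity of Step 1 transfers the shape.

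For $\mathbb{P}(0,0,c,x,y;n)$, then, I write it as $1$ minus two horizontal-lozenge probabilities in $H_{c+2n,2n,2n}$ and $H_{2n,c+2n,2n}$. These satisfy the theorem whenever $\mathbb{P}(0,0,0,x'',y'';n)$ does, for all positions. The main obstacle I anticipate is the careful geometric verification that the rotated positions have the form $(x'+2n,y'+2n)$ with $x',y'$ independent of $n$, and that the relevant triangle lies in the interior for $n$ large. I would handle this by computing coordinates relative to the hexagon's center, where the rotation acts linearly.
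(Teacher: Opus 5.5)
Your proposal is correct and follows the paper's proof. Both reduce to $a=b=0$ via Theorems~\ref{pararedA} and~\ref{pararedB}, use that the three lozenge types covering a fixed triangle have probabilities summing to $1$, identify the two leaning-lozenge probabilities with horizontal-lozenge probabilities in the rotated hexagons $H_{c+2n,2n,2n}$ and $H_{2n,c+2n,2n}$, and reduce those to the regular case. The $n$-independence of the rotated positions, which you rightly flag as the main bookkeeping point, is left implicit in the paper, and your center-relative-coordinates argument is the natural way to fill it in.
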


\begin{proof}
    First of all, we already have seen that $\mathbb{P}(a,b,c,x,y;n)$ satisfies the theorem if\break $\mathbb{P}(0,0,c,x',y';n)$ does for all positions $(x',y')\in \Z^2$. Therefore lets assume, that we already have $a=0$ and $b=0$.
    
    Inside the triangular grid, observe that in a lozenge tiling of $H_{a,b,c}$ every triangle $T$ inside the semiregular hexagon has to be covered either by an horizontal lozenge, a left-leaning lozenge or a right-leaning lozenge. In terms of probabilities when sampling tilings uniformly at random this means for every such triangle $T$ we have
    \[\mathbb{P}\big(\text{\lozHlow\ covers } T\big)+\mathbb{P}\big(\text{\ \lozL \ covers }T\big)+\mathbb{P}\big(\text{\ \lozR \ covers }T\big)=1.\]
    However, also the fixed left-leaning or right-leaning lozenge corresponds to a horizontal lozenge in a rotated hexagon. See Figure~\ref{rotC} for an visualisation of the idea.
    \begin{figure}
    \centering
    \includegraphics[width=0.4\textwidth]{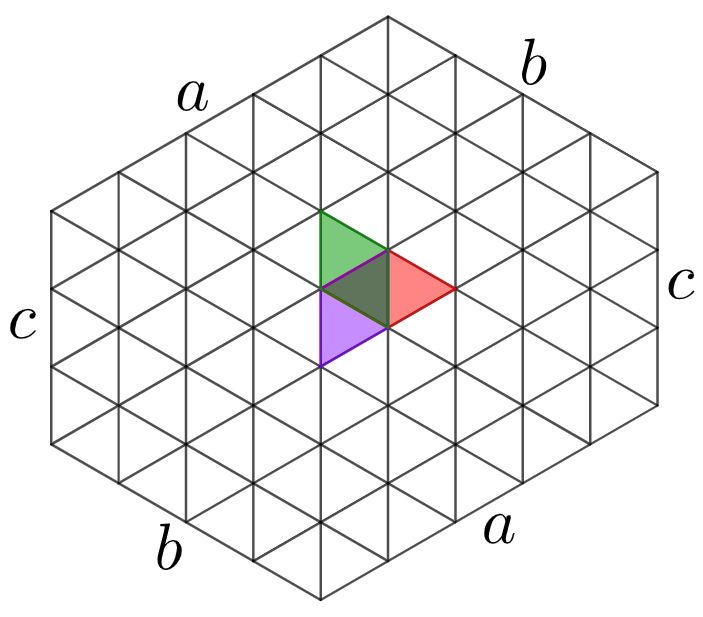}
    \caption{The red lozenge is horizontal in $H_{a,b,c}$. However, its green counterpart is horizontal in the rotated hexagon $H_{b,c,a}$ and the purple hexagon is horizontal in $H_{c,a,b}$}
    \label{rotC}
    \end{figure}
    In conclusion, if some horizontal lozenge is located at $(x,y)\in\Z^2$ inside the semiregular hexagon $H_{0,0,c}$ then there also exist locations $(\bar{x},\bar{y})$ and $(\tilde{x},\tilde{y})\in\Z^2$ such that
    \[\mathbb{P}(0,0,c,x,y;n)+\mathbb{P}(0,c,0,\bar{x},\bar{y};n)+\mathbb{P}(c,0,0,\tilde{x},\tilde{y};n)=1.\]
    From this equation it is easy to see that $\mathbb{P}(0,0,c,x,y;n)$ fulfils the theorem once this is shown for $\mathbb{P}(0,c,0,\bar{x},\bar{y};n)$ and $\mathbb{P}(c,0,0,\tilde{x},\tilde{y};n)$. However, by Theorem~\ref{pararedB} and Theorem~\ref{pararedA} both $\mathbb{P}(0,c,0,\bar{x},\bar{y};n)$ and $\mathbb{P}(c,0,0,\tilde{x},\tilde{y};n)$ satisfy Theorem~\ref{1/3} if the placement probability $\mathbb{P}(0,0,0,x',y';n)$ does it for all locations $(x',y')\in\Z^2$ as proposed. In particular, this also finishes the proof of the main result of this section.
\end{proof}

In conclusion, we have shown that once the $1/3$-phenomenon is proven in regular hexagons it also remains valid in arbitrary semiregular hexagons.

\section{Reduction to the centre and proof of the phenomenon} \label{centreproof}

In this section we finish our proof of the 1/3-phenomenon. We follow a similar line of ideas as in \cite{1/4} in the case of the Aztec diamond. The goal is to further reduce the problem of computing $\mathbb{P}(0,0,0,x,y;n)$ to the case of the origin where both $x=0$ and\break $y=0$. For this particular position, the 1/3-phenomenon has already been verified by Fulmek and Krattenthaler~\cite{CenterCase}.
This final reduction is achieved by using Christoph Koutschan's Mathematica package \texttt{HolonomicFunctions.m} \cite{holo,KoutschanThesis,Koutschan} to compute linear recurrences of $\mathbb{P}(0,0,0,x,y;n)$ in the variables $x$ and $y$. In particular, we are going to use Creative Telescoping to do so. The following approach for handling holonomic multisums via iterated Creative Telescoping was suggested to the author by Christian Krattenthaler. He, Sylvie Corteel and Frederick Huang applied a similar method in \cite{genaztectriangle}. First, let
\begin{align}
    \mathbb{P}(x,y;n):={}& \mathbb{P}(0,0,0,x,y;n)\notag\\
    ={}&\frac{(2n)!(2n)!}{(4n)!}\sum_{j=1}^{2n}\sum_{i=1}^{j}\sum_{s=1}^j (-1)^{i+s}\binom{j-1}{s-1}\binom{2n+i+x-y-2}{x+2n-1}\notag\\
    &\cdot \binom{2n+s-x+y-1}{s-x-1}\frac{(2n+1)_{s-1}(2n+1)_{i-1}(4n+i)_{j-i}}{(4n+1)_{s-1}(i-1)!(j-i)!}.\notag
\end{align}
We want to apply Creative Telescoping to the summand given by
\begin{align}
    H(x,y,n,j,i,s)={}&(-1)^{i+s}\binom{j-1}{s-1}\binom{2n+i+x-y-2}{x+2n-1}\notag\\
    &\cdot \binom{2n+s-x+y-1}{s-x-1}\frac{(2n+1)_{s-1}(2n+1)_{i-1}(4n+i)_{j-i}}{(4n+1)_{s-1}(i-1)!(j-i)!}.\notag
\end{align}
As a first step, we need to implement it in Mathematica.
\begin{lstlisting}[language=Mathematica, numbers=none]
    HoloSummand[x_, y_, n_, j_, i_, s_] := (-1)^(i + s) Binomial[j - 1, s - 1] Binomial[2 n + i + x - y - 2, x + 2 n - 1] Binomial[2 n + s - x + y - 1, s - x - 1] Pochhammer[2 n +1, s - 1] Pochhammer[2 n + 1, i - 1] Pochhammer[4 n + i, j - i]/Factorial[j - i]/Factorial[i - 1]/Pochhammer[4 n + 1, s - 1].
\end{lstlisting}
Now, we apply the Creative Telescoping algorithm via
\begin{lstlisting}[language=Mathematica, numbers=none]
CreativeTelescoping[
 Annihilator[HoloSummand[x, y, n, j, i, s], {S[x], S[j], S[i], S[s]}],
  S[s] - 1, {S[x], S[j], S[i]}].
\end{lstlisting}
As already explained in Section \ref{ZAHA}, \texttt{Annihilator[...]} produces a Gröbner basis for the left-ideal of operators $Ann(x,y,n,j,i,s,S_x,S_j,S_i,S_s)$ in $\Q(x,y,n,j,i,s)[S_x,S_j,S_i,S_s]$ (the polynomials of shifting operators $S_x,S_j,S_i$ or $S_s$ over the field of rational functions in $x,y,n,j,i,s$) such that
\[Ann(x,y,n,j,i,s,S_x,S_j,S_i,S_s)H(x,y,n,j,i,s)=0.\]
Furthermore, \texttt{CreativeTelescoping[...]} yields as output two lists $\{A_1,A_2,A_3\}$ and $\{B_1,B_2,B_3\}$ of operators of different dependencies such that
\begin{align}
    A_k(x,y,n,j,i,S_x,S_j,S_i)&H(x,y,n,j,i,s)\notag\\
    &{}+(S_s-1)B_k(x,y,n,j,i,s,S_x,S_j,S_i,S_s)H(x,y,n,j,i,s)=0 \label{Tele1}
\end{align}
for all $k=1,2,3$. 
\begin{rem}
    Please notice that all operators $A_i$, $B_i$ but also $C_i,D_i$ and $E$ below are explicit polynomials in the shifting operators. However, their sheer size makes it hard to write them down properly. Their existence and structure are secured by the algorithms and if one wants to further examine them, they can easily be reproduced. 
\end{rem}

If we now have a closer look at $H$ we observe that
\[\sum_{s=1}^jH(x,y,n,j,i,s)=\sum_{s=0}^\infty H(x,y,n,j,i,s)\]
and also
\[\sum_{i=1}^jH(x,y,n,j,i,s)=\sum_{i=0}^\infty H(x,y,n,j,i,s).\]
Then, if we take the sum of the Expressions~\eqref{Tele1} over all values for $s$ we obtain
\begin{align}
    A_k(x,y,n,j,i&,S_x,S_j,S_i)\sum_{s=0}^\infty H(x,y,n,j,i,s)\notag\\
    &{}+\sum_{s=0}^\infty(S_s-1)B_k(x,y,n,j,i,s,S_x,S_j,S_i,S_s)H(x,y,n,j,i,s)=0 \notag
\end{align}
for all $k=1,2,3$. A small analysis of the operators $B_k$, and the observation that \break $H(x,y,n,j,i,s)$ equals $0$ for $s=0$ or $s$ large enough, tells us that the second sum is telescoping and vanishes. Hence, we have
\[A_k(x,y,n,j,i,S_x,S_j,S_i)\sum_{s=0}^\infty H(x,y,n,j,i,s)=0\]
for all $k=1,2,3$. In a next step, we repeat this procedure. Namely we look for a Creative Telescoping expression in the ideal spanned by $A_1, A_2$ and $A_3$. Thus, we apply
\[\texttt{CreativeTelescoping}[\{A_1,A_2,A_3\},\texttt{S[i]-1},\{\texttt{S[x],S[j]}\}].\]
This then produces operators $C_1,C_2,C_3$ and $D_1,D_2,D_3$ such that
\begin{align}
    C_k(x,y,n,j,S_x,S_j)\sum_{s=0}^\infty &H(x,y,n,j,i,s)\notag\\
    &+(S_i-1)D_k(x,y,n,j,i,S_x,S_j,S_i)\sum_{s=0}^\infty H(x,y,n,j,i,s)=0 \label{Tele2}
\end{align}
for all $k=1,2,3$. Now, notice that $D\cdot \sum H$ is of the following shape:
\begin{align}
    D_k(x,y,n,&j,i,S_x,S_j,S_i)\sum_{s=0}^\infty H(x,y,n,j,i,s)=\notag\\
    &\sum_{l=1}^Lq_l(x,y,n,j,i)\sum_{s=0}^\infty H(x+x_l,y,n,j+j_l,i+i_l,s)\notag
\end{align}
for some rational functions $q_l(x,y,n,j,i)$ and some non-negative integers $L, x_1,\dots, x_L$,\break $j_1,\dots,j_L$ and $i_1,\dots,i_L$. However, a small analysis of the $D_k$ shows that they all have polynomial degree in $S_i$ equal to $0$. Therefore, $i_1=i_2=\dots=i_L=0$. Hence, if we take the sum of Expressions~\eqref{Tele2} over all values of $i\in\N\cup\{0\}$ the second summand telescopes and vanishes again. In conclusion, by our observation from before we obtain 
\begin{align}
    \sum_{i=0}^\infty C_k(x,y,n,j,S_x,S_j)&\sum_{s=0}^\infty H(x,y,n,j,i,s)\notag\\
    &=C_k(x,y,n,j,S_x,S_j)\sum_{i=0}^\infty\sum_{s=0}^\infty H(x,y,n,j,i,s)\notag\\
    &=C_k(x,y,n,j,S_x,S_j)\sum_{i=1}^j\sum_{s=1}^j H(x,y,n,j,i,s)=0\notag
\end{align}
for all $k=1,2,3$. Finally, we apply Creative Telescoping a third time. We are looking for a telescoping expression with respect to the variable $j$. If we run
\[\texttt{CreativeTelescoping}[\{C_1,C_2,C_3\},\texttt{S[j]-1},\{\texttt{S[x]}\}]\]
then after a couple of hours we obtain the two operators $(S_x-1)$ and $E(x,y,n,j,S_x,S_j)$ such that
\begin{align}
    (S_x-1)\sum_{i=1}^j\sum_{s=1}^j &H(x,y,n,j,i,s)\notag\\
    &+(S_j-1)E(x,y,n,j,S_x,S_j)\sum_{i=1}^j\sum_{s=1}^j H(x,y,n,j,i,s)=0. \label{Tele3}
\end{align}
The function $E(x,y,n,j,S_x,S_j)$ is rational in $x,y,n,j$ and polynomial in $S_x$ and $S_j$. In particular, it is linear in $S_x$ and quadratic in $S_j$. Hence, there exist rational functions $q_{jj}(x,y,n,j)$, $q_{j}(x,y,n,j)$, $q_x(x,y,n,j)$ and $q_1(x,y,n,j)$ such that
\begin{align}
    E(x,y,n,j,S_x,&S_j)\notag\\
    &=q_{jj}(x,y,n,j)\cdot S_j^2+q_{j}(x,y,n,j)\cdot S_j+q_{x}(x,y,n,j)\cdot S_x+q_{1}(x,y,n,j).\notag
\end{align}
In conclusion we have
\begin{align}
     E(x,&y,n,j,S_x,S_j)\sum_{i=1}^j\sum_{s=1}^j H(x,y,n,j,i,s)\notag\\
     &=q_{jj}(x,y,n,j)\cdot \sum_{i=1}^{j+2}\sum_{s=1}^{j+2} H(x,y,n,j+2,i,s)\notag\\
     &\hphantom{aaaa}+q_{j}(x,y,n,j)\cdot  \sum_{i=1}^{j+1}\sum_{s=1}^{j+1} H(x,y,n,j+1,i,s)\notag\\
     &\hphantom{aaaaaaaa}+q_{x}(x,y,n,j)\cdot \sum_{i=1}^j\sum_{s=1}^j H(x+1,y,n,j,i,s)\notag\\
     &\hphantom{aaaaaaaaaaaa}+q_1(x,y,n,j)\sum_{i=1}^j\sum_{s=1}^j H(x,y,n,j,i,s)=:G(x,y,n,j)\notag.
\end{align}

If we now take the sum of the Expressions~\eqref{Tele3} over all $j=-3,-2,-1,0,1,2,\dots,2n$ and multiply everything by $\frac{(2n)!(2n)!}{(4n)!}$ we obtain by telescoping
\begin{align}
    \mathbb{P}(x+1,y;n)-\mathbb{P}(x,y;n)+\frac{(2n)!(2n)!}{(4n)!}\Big(G(x,y,n,2n+1)-G(x,y,n,-3)\Big)=0. \label{xrec}
\end{align}
Notice that the additional summands for non-positive instances of $j$ all vanish. Therefore we also have $G(x,y,n,-3)=0$.
Let us now have a closer look at $G(x,y,n,2n+1)$. We compute
\begin{align}
    G(x,y,n,2n+1)&=q_{jj}(x,y,n,2n+1)\cdot \sum_{i=1}^{2n+3}\sum_{s=1}^{2n+3} H(x,y,n,2n+3,i,s)\notag\\
     &\hphantom{aaaa}+q_{j}(x,y,n,2n+1)\cdot  \sum_{i=1}^{2n+2}\sum_{s=1}^{2n+2} H(x,y,n,2n+2,i,s)\notag\\
     &\hphantom{aaaaaaaa}+q_{x}(x,y,n,2n+1)\cdot \sum_{i=1}^{2n+1}\sum_{s=1}^{2n+1} H(x+1,y,n,{2n+1},i,s)\notag\\
     &\hphantom{aaaaaaaaaaaa}+q_1(x,y,n,2n+1)\sum_{i=1}^{2n+1}\sum_{s=1}^{2n+1} H(x,y,n,{2n+1},i,s).\notag
\end{align}

However, for $a\in\{1,2,3\}$ we have already seen in the previous section
\begin{align}
    \sum_{i=1}^{2n+a}\sum_{s=1}^{2n+a} H(x,y,n,{2n+a},i,s)=I_{a,0,0,x,y}(a+2n;n)\cdot F_{a,0,0,x,y}(a+2n;n).\notag
\end{align}
For the right-hand side, we have already shown in Proposition \ref{IFshape} the existence of rational functions $q_{a,0,0,x,y}(n)$ and $r_{a,0,0,x,y}(n)$ such that
\begin{align}
    I_{a,0,0,x,y}(a+2n;n)&=(-1)^n\frac{(3n+1)!}{(n!)^3}\cdot q_{a,0,0,x,y}(n) \ \ \text{ and }\notag \\
    F_{a,0,0,x,y}(a+2n;n)&=(-1)^n\frac{(2n)!(3n+1)!(4n)!}{(n!)^3(6n+2)!}\cdot r_{a,0,0,x,y}(n).\notag
\end{align}
Therefore, we already have 
\[\frac{(2n)!(2n)!}{(4n)!}G(x,y,n,2n+1)=g_{x,y}(n)\cdot \binom{2n}{n}^3\bigg/\binom{6n+2}{3n+1}\]
for some rational function $g_{x,y}$, for all choices of values for $x$ and $y$, and $n$ large enough. In combination with the Recurrence~\eqref{xrec} and Lemma~\ref{recpreserv} this suffices to deduce the following corollary.
\begin{cor}
    Let $x,y\in \Z$ be arbitrary. Then $\mathbb{P}(x,y;n)$ admits the $1/3$-phenomenon if $\mathbb{P}(0,y;n)$ does. 
\end{cor}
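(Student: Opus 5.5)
Recurrence~\eqref{xrec} is a first-order inhomogeneous relation in $x$:
\[
\mathbb{P}(x+1,y;n)=\mathbb{P}(x,y;n)-g_{x,y}(n)\binom{2n}{n}^3\bigg/\binom{6n+2}{3n+1},
\]
where $g_{x,y}(n)$ is rational in $n$ and is obtained from the rational functions $q_{jj},q_j,q_x,q_1$ evaluated at $j=2n+1$ together with the rational factors from Proposition~\ref{IFshape} and from $\frac{(2n)!(2n)!}{(4n)!}$. The plan is to induct on $x$ using this recurrence, once forward and once backward, so as to cover all integers $x$.

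\emph{Forward direction ($x\ge 0$).} Assume $\mathbb{P}(0,y;n)=\frac13+f_{0,y}(n)\binom{2n}{n}^3/\binom{6n+2}{3n+1}$ for $n$ large. Suppose the same form holds for $\mathbb{P}(x,y;n)$ with rational $f_{x,y}$. Then \eqref{xrec} gives
\[
\mathbb{P}(x+1,y;n)=\tfrac13+\bigl(f_{x,y}(n)-g_{x,y}(n)\bigr)\binom{2n}{n}^3\bigg/\binom{6n+2}{3n+1},
\]
and $f_{x,y}-g_{x,y}$ is again rational. The constant $\frac13$ passes through unchanged because the leading operator in \eqref{Tele3} is exactly $S_x-1$. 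This is the same mechanism as Lemma~\ref{recpreserv}, applied to $\mathbb{P}(x,y;n)-\frac13$ with $C(n)=\binom{2n}{n}^3/\binom{6n+2}{3n+1}$ and $d=1$, except that the recurrence is inhomogeneous. Each step enlarges the threshold $N$ by at most a finite amount, so for fixed $x$ the result holds for all $n\ge N_{x,y}$.

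\emph{Backward direction ($x<0$).} Rewrite \eqref{xrec} as $\mathbb{P}(x,y;n)=\mathbb{P}(x+1,y;n)+g_{x,y}(n)\,C(n)$ and induct downward. For fixed negative $x$, the position $(x+2n,y+2n)$ lies inside $H_{2n,2n,2n}$ once $n$ is large, so Fischer's formula and the telescoping derivation still apply.

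\emph{Main obstacle.} The key issue is the validity of the derivation of \eqref{xrec} for every fixed $x,y$ and all sufficiently large $n$. Three points need checking.
\begin{itemize}
\item The rational functions $q_{jj},q_j,q_x,q_1$ and the coefficients of the $B_k,D_k$ must have no poles at the specialised values, in particular at $j=2n+1$ and along the summation ranges. For fixed $x,y$, a denominator that is a nonzero polynomial in $(n,j)$ vanishes only on finitely many lines or finitely many $n$. I would inspect the denominators in the output and enlarge $N_{x,y}$ to avoid them.
\item The boundary terms must vanish: $G(x,y,n,-3)=0$, and the $s$- and $i$-telescoping contributions must vanish at the ends of the summation ranges.
\item Fischer's formula in Theorem~\ref{fischer} requires $0\le x\le b+a-1$. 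For fixed $x$ and $n$ large this constraint is satisfied automatically.
\end{itemize}
Once these checks are made, the corollary follows immediately by induction on $|x|$.
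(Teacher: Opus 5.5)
Your proposal is correct and is essentially the paper's argument: the first-order recurrence \eqref{xrec}, whose inhomogeneous term $\frac{(2n)!(2n)!}{(4n)!}G(x,y,n,2n+1)$ has the required shape by Proposition~\ref{IFshape}, combined with the induction of Lemma~\ref{recpreserv} applied to $\mathbb{P}(x,y;n)-\frac13$. Your explicit downward induction for $x<0$ fills in a step the paper leaves implicit, since Lemma~\ref{recpreserv} only runs upward and Proposition~\ref{IFshape} is stated only for non-negative parameters; the latter extends to all integers because $I$ and $F$ are invariant under $(a,b,c,x,y,n)\mapsto(a+2n_0,b+2n_0,c+2n_0,x+2n_0,y+2n_0,n-n_0)$.
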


Finally, we play the same game with respect to the parameter $y$. However, we are now allowed to set $x=0$ to improve the efficiency. First we run
\begin{lstlisting}[language=Mathematica, numbers=none]
CreativeTelescoping[
 Annihilator[HoloSummand[0, y, n, j, i, s], {S[y], S[j], S[i], S[s]}],
  S[s] - 1, {S[y], S[j], S[i]}].
\end{lstlisting}
This returns operators $A_1',A'_2,A_3'$ and $B_1',B_2',B_3'$ such that
\begin{align}
      A_k'(y,n,j,i,S_y,S_j,S_i)&H(0,y,n,j,i,s)\notag\\
    &{}+(S_s-1)B_k'(y,n,j,i,s,S_y,S_j,S_i,S_s)H(0,y,n,j,i,s)=0 \notag
\end{align}
for all $k=1,2,3$. For the same reasons as before, if we now take the sum over all non-negative values of $s$, the second part telescopes and vanishes and what remains is
\[A_k'(y,n,j,i,S_y,S_j,S_i)\sum_{s=0}^\infty H(0,y,n,j,i,s)=0\]
for all $k=1,2,3$. Moving onward, we apply Creative Telescoping again:
\[\texttt{CreativeTelescoping}[\{A_1',A_2',A_3'\},\texttt{S[i]-1},\{\texttt{S[y],S[j]}\}].\]
This yields operators $C'_1,C'_2,C_3'$ and $D'_1,D'_2,D_3'$ such that 
\begin{align}
    C_k'(y,n,j,S_y,S_j)\sum_{s=0}^\infty &H(0,y,n,j,i,s)\notag\\
    &+(S_i-1)D_k'(y,n,j,i,S_y,S_j,S_i)\sum_{s=0}^\infty H(0,y,n,j,i,s)=0\notag
\end{align}
for all $k=1,2,3$. Analogously to before, the second part vanishes when computing the sum over all non-negative values for $i$. I.e. we arrive at
\[C_k'(y,n,j,S_y,S_j)\sum_{i=1}^j\sum_{s=1}^jH(0,y,n,j,i,s)=0\]
for all $k=1,2,3$. In the end, we apply Creative Telescoping one final time:
\[\texttt{CreativeTelescoping}[\{C_1',C_2',C_3'\},\texttt{S[j]-1},\{\texttt{S[y]}\}].\]
We obtain the operators $(S_y-1)$ and $E'(y,n,j,S_y,S_j)$ such that
\begin{align}
    (S_y-1)\sum_{i=1}^j\sum_{s=1}^j &H(0,y,n,j,i,s)\notag\\
    &+(S_j-1)E'(y,n,j,S_y,S_j)\sum_{i=1}^j\sum_{s=1}^j H(0,y,n,j,i,s)=0. \label{TeleY3}
\end{align}
Now, $E'(y,n,j,S_y,S_j)$ is of similar shape as $E(x,y,n,j,S_x,S_j)$ in the sense that there exist rational functions $r_{jj}(y,n,j),r_j(y,n,j),r_y(y,n,j)$ and $r_1(y,n,j)$ such that
\begin{align}
     E'(&y,n,j,S_y,S_j)\sum_{i=1}^j\sum_{s=1}^j H(0,y,n,j,i,s)\notag\\
     &=r_{jj}(y,n,j)\cdot \sum_{i=1}^{j+2}\sum_{s=1}^{j+2} H(0,y,n,j+2,i,s)\notag\\
     &\hphantom{aaaa}+r_{j}(x,y,n,j)\cdot  \sum_{i=1}^{j+1}\sum_{s=1}^{j+1} H(x,y,n,j+1,i,s)\notag\\
     &\hphantom{aaaaaaaa}+r_{y}(x,y,n,j)\cdot \sum_{i=1}^j\sum_{s=1}^j H(0,y+1,n,j,i,s)\notag\\
     &\hphantom{aaaaaaaaaaaa}+r_1(y,n,j)\sum_{i=1}^j\sum_{s=1}^j H(0,y,n,j,i,s)=:G'(y,n,j)\notag.
\end{align}
Again, we sum over all $j=-3,-2,-1,0,1,\dots, 2n$ in Expression~\ref{TeleY3} and multiply by $\frac{(2n)!(2n)!}{(4n)!}$ to arrive at
\begin{align}
    \mathbb{P}(0,y+1;n)-\mathbb{P}(0,y;n)+\frac{(2n)!(2n)!}{(4n)!}\cdot G'(y,n,2n+1). \label{yrec}
\end{align}

Analogously to before, we deduce
\[\frac{(2n)!(2n)!}{(4n)!}\cdot G'(y,n,2n+1)=g'_{y}(n)\cdot \binom{2n}{n}^3\bigg/\binom{6n+2}{3n+1}\]
for some rational function $g'_y(n)$ in $n$. If we sum up all our observations until now, the following statement is seen to be true.
\begin{cor}
    Let $x,y\in \Z$ be arbitrary. Then $\mathbb{P}(x,y;n)$ admits the $1/3$-phenomenon if the phenomenon holds for $\mathbb{P}(0,0;n)$.
\end{cor}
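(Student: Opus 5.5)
The plan is to chain the two reductions just established, the one in $x$ and the one in $y$, each as an induction driven by a first-order inhomogeneous recurrence whose inhomogeneity already has the desired shape. Write
\[
C(n):=\binom{2n}{n}^3\bigg/\binom{6n+2}{3n+1},
\]
and say that a quantity $Q(n)$ has \emph{good shape} if $Q(n)=\tfrac13+f(n)C(n)$ for some rational function $f$ and all large $n$. The key structural observation is that good shape is preserved under adding $g(n)C(n)$ with $g$ rational. That is exactly what Recurrences~\eqref{xrec} and~\eqref{yrec} provide.

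First, fix $y$ and use \eqref{xrec}:
\[
\mathbb{P}(x+1,y;n)=\mathbb{P}(x,y;n)-g_{x,y}(n)\,C(n).
\]
If $\mathbb{P}(x,y;n)=\tfrac13+f(n)C(n)$, then $\mathbb{P}(x+1,y;n)=\tfrac13+(f-g_{x,y})(n)C(n)$. Induction upward from $x=0$ then gives good shape for all $x\ge 0$. Rewriting the same identity as
\[
\mathbb{P}(x,y;n)=\mathbb{P}(x+1,y;n)+g_{x,y}(n)C(n)
\]
gives the downward step, so all $x\in\Z$ are covered, provided $n$ is large. Equivalently, apply Lemma~\ref{recpreserv} to $\mathbb{P}(x,y;n)-\tfrac13$ with the inhomogeneous term absorbed. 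This is the first corollary.

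Second, run the identical argument with \eqref{yrec} at $x=0$. The relation
\[
\mathbb{P}(0,y+1;n)=\mathbb{P}(0,y;n)-g'_y(n)\,C(n)
\]
transports good shape from $\mathbb{P}(0,0;n)$ to every $\mathbb{P}(0,y;n)$, in both directions. Combining with the first step then gives good shape for every $\mathbb{P}(x,y;n)$.

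The main obstacle is the threshold ``$n$ large enough''. Every rational function $g_{x,y}$ and $g'_y$, and the validity of Fischer's formula (Theorem~\ref{fischer} requires $0\le x\le 2n-1$ and similar bounds), depend on $x,y$. The telescoping sums also need $G(x,y,n,-3)=0$, and the denominators of the operators $E$ and $E'$ must not vanish. For fixed target $(x,y)$, the induction passes through only finitely many intermediate pairs: the path from $(0,0)$ to $(0,y)$ to $(x,y)$. So I take $N$ to be the maximum of the finitely many thresholds along that path, and also large enough that no denominator of $q_{jj},q_j,q_x,q_1$ (respectively the $r$'s) vanishes at $j=2n+1$. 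With that choice, each step is valid for all $n\ge N$.

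Negative positions need some care. I will check them by the same index shift used at the start of Section 3, so no extra computation should be needed.
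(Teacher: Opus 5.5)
Your proposal is correct and is essentially the paper's own argument. The recurrences \eqref{yrec} (at $x=0$) and \eqref{xrec} are first-order and inhomogeneous, with inhomogeneous terms equal to a rational function times $\binom{2n}{n}^3/\binom{6n+2}{3n+1}$ by Proposition~\ref{IFshape}; they carry the $1/3$-shape from $(0,0)$ to $(0,y)$ and then to $(x,y)$, and your two-sided induction and threshold bookkeeping only spell out what the paper leaves implicit.

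One caution on your final remark. The index shift from the start of Section~3 moves $a,b,c$ together with $x,y$, so applying it to $\mathbb{P}(x,y;n)$ itself takes you out of the regular family. It is legitimate on the factors $I_{a,b,c,x,y}(a+2n;n)$ and $F_{a,b,c,x,y}(a+2n;n)$, which depend on the parameters only through $a+2n,\dots,y+2n$. There it extends Proposition~\ref{IFshape}, which is stated only for non-negative parameters, to the negative $x,y$ that your downward steps require.
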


However, this finishes the proof for the $1/3$-phenomenon since a lot of single instances for $\mathbb{P}(a,b,c,x,y,;n)$ have already been analysed. In particular, in \cite[Cor. 7 (1.8)]{CenterCase} Markus Fulmek and Christian Krattenthaler already computed that
\[\mathbb{P}(0,0;n)=\mathbb{P}(0,0,0,0,0;n)=\frac{1}{3}-\frac{6n+1}{6(3n+1)}\cdot \binom{2n}{n}^3\Big/\binom{6n+2}{3n+1}\]
which clearly aligns with the shape demanded by Theorem~\ref{1/3}. Thus, the phenomenon holds for all $a,b,c,x,y\in \Z$ and all $n\in\N$ large enough. \qed

\newpage

\bibliographystyle{plain}
\bibliography{bib}

\end{document}